\newtheorem{theorem}{Theorem}[section]
\newtheorem{introthm}{Theorem}
\newtheorem{introlem}{Lemma}
\newtheorem{introcor}{Corollary}
\newtheorem{lemma}[theorem]{Lemma}
\newtheorem{question}{Question}
\newtheorem{proposition}[theorem]{Proposition}
\newtheorem{corollary}[theorem]{Corollary}
\theoremstyle{definition}
\newtheorem{definition}[theorem]{Definition}
\newtheorem{example}[theorem]{Example}
\newtheorem{remark}[theorem]{Remark}
\numberwithin{equation}{theorem}
\newlength{\dovheight}
\newcommand{\doubleoverline}[1]{%
    \settoheight{\dovheight}{\ensuremath{\overline{#1}}}%
    \addtolength{\dovheight}{-0.1ex}%
    \overline{\vphantom{\rule{1pt}{\dovheight}}%
    \smash{\overline{#1}}}}
\newlength{\dhatheight}
\newcommand{\doublewidehat}[1]{%
    \settoheight{\dhatheight}{\ensuremath{\widehat{#1}}}%
    \addtolength{\dhatheight}{-0.35ex}%
    \widehat{\vphantom{\rule{1pt}{\dhatheight}}%
    \smash{\widehat{#1}}}}
    \newlength{\dhatheightX}
\newcommand{\doublewidehatX}[1]{%
    \settoheight{\dhatheightX}{\ensuremath{\widehat{#1}}}%
    \addtolength{\dhatheightX}{-0.35ex}%
    \widehat{\vphantom{\rule{1pt}{\dhatheightX}}%
    \smash{\widehat{\!#1}}}}
\newlength{\dhatheightt}
\newlength{\Ywidth}
\newcommand{\git}{\mathbin{
  \mathchoice{/\mkern-6mu/}
    {/\mkern-6mu/}
    {/\mkern-5mu/}
    {/\mkern-5mu/}}}
\def\vector2#1#2{\left(\begin{array}{c} #1 \\ #2 \end{array}\right)}
\def\klt{{\rm klt}}
\def\Cox{\mathcal{R}}
\def\MDS{{\rm MDS}}
\def\Sing{{\rm Sing}}
\def\CR{{\rm CR}}
\def\Pic{{\rm Pic}}
\def\Cl{{\rm Cl}}
\def\Xendl{\mathfrak{X}}
\def\CC{{\mathbb C}}
\def\KK{{\mathbb K}}
\def\TT{{\mathbb T}}
\def\ZZ{{\mathbb Z}}
\def\NN{{\mathbb N}}
\def\QQ{{\mathbb Q}}
\def\reg{ \mathrm{reg}}
\def\Chi{{\mathbb X}}
\def\GIT{{\rm GIT}}
\def\Pic{{\rm Pic}}
\def\Spec{{\rm Spec}}
\def\reg{{\rm reg}}
\title[Gorensteinness and iteration of Cox rings]{Gorensteinness and iteration of Cox rings \\ for Fano type varieties}
\author[L.~Braun]{Lukas Braun}
\address{Mathematisches Institut, Universit\"at T\"ubingen,
Auf der Morgenstelle 10, 72076 T\"ubingen, Germany}
\email{braun@math.uni-tuebingen.de}
\subjclass[2010]{14J45, 14M05, 13A02}
\keywords{Fano varieties, Quasicones, Mori Dream Spaces, Gorenstein, Iteration of Cox rings}
\begin{document}

\begin{abstract}
We show that finitely generated Cox rings are Gorenstein. This leads to a refined characterization of varieties of Fano type: they are exactly those projective varieties with Gorenstein canonical quasicone Cox ring.
We then show that for varieties of Fano type and Kawamata log terminal quasicones, iteration of Cox rings is finite with factorial master Cox ring. Moreover, we prove a relative version of Cox ring iteration for \emph{almost principal} solvable $G$-bundles.
\end{abstract}

\maketitle

\section*{Introduction}
Let $X$ be a normal algebraic variety over the field $\CC$ of complex numbers. If $X$ has finitely generated divisor class group $\Cl(X)$, one can define its $\Cl(X)$-graded \emph{Cox ring}
$$
\Cox(X):=\bigoplus_{\Cl(X)} \Gamma(X,\mathcal{O}_X(D))
$$
and ask if it is a finitely generated $\CC$-algebra. If so, $X$ is called a \emph{Mori Dream Space} ($\MDS$) due to Hu and Keel~\cite{HuKeel}, who showed that this property is equivalent to a good behaviour with respect to the minimal model program, whence the name. 

Note that this definition differs from the original one of Hu and Keel, as we do not assume $X$ to be projective and $\QQ$-factorial, and in that $\Cox(X)$ is graded by $\Cl(X)$, not $\Pic(X)$. Moreover, some care has to be taken if $\Cl(X)$ has torsion.
The original reference for this case is~\cite{BerchHaus}, where also the case of non-finitely generated $\Cox(X)$ is investigated. See also the comprehensive work~\cite{coxrings}. 

Examples of $\MDS$ include toric~\cite{cox} and more general spherical~\cite{brion} varieties as well as certain ones with an action of a torus of lower dimension, see~\cite{hausüß}. It was  shown in~\cite{birketal} that varieties of \emph{Fano type} - i.e. projective varieties admitting an effective $\QQ$-divisor $\Delta$ such that  $-(K_X+\Delta)$ is ample and $(X,\Delta)$ is \emph{Kawamata log terminal} ($\klt$) - are $\MDS$. 

Cox rings of $\MDS$ enjoy many nice properties: 
The $\Cl(X)$-grading induces an action of the characteristic quasitorus $H_X:=\Spec\, \CC[\Cl(X)]$ on the \emph{total coordinate space} $\overline{X}:=\Spec\, \Cox(X)$, and there is an open subvariety $\widehat{X}\subseteq \overline{X}$ with complement of codimension at least two such that $X$ is isomorphic to the good quotient $\widehat{X}\git H_X$. We call $\widehat{X}$ the \emph{characteristic space}.

In~\cite{BerchHaus} and with different methods in~\cite{fac} and~\cite{arzhfac} it was shown that if $\Cl(X)$ is free, then $\Cox(X)$ is \emph{factorial} and thus in particular it is Gorenstein, i.e.   the canonical class $K_{\overline{X}}$ is Cartier. If  $\Cl(X)$ is not free, then $\Cox(X)$ is still \emph{factorially graded} in the sense that homogeneous elements can be expressed as a unique product of irreducible homogeneous ones, see~\cite{arzhfac}, but properties such as  Gorensteinness do not follow from this. Moreover, it was shown independently in~\cite{gongyo, brown, kaw} that a projective variety $X$ is of Fano type if and only if $\Cox(X)$ is log terminal. Analogously, (projective) $\MDS$ are shown to be of Calabi-Yau type if and only if $\Cox(X)$ is log canonical~\cite{kaw, gongyo}. In~\cite{brown}, Brown also observed that if $X$ is smooth and Fano, then  $\Cox(X)$ is Gorenstein canonical. Our first result is the following:

\begin{introthm}
\label{thm:Gor}
Let $X$ be an $\MDS$. Then $\Cox(X)$ is Gorenstein.
\end{introthm}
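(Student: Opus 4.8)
The plan is to reduce Gorensteinness of $\Cox(X)$ to a statement about the characteristic space $\widehat X \subseteq \overline X$, which carries the action of the characteristic quasitorus $H_X$ and has good quotient $\widehat X \git H_X \cong X$. Since $\overline X \setminus \widehat X$ has codimension at least two in $\overline X$ and $\overline X = \Spec\,\Cox(X)$ is normal (being a total coordinate space of an $\MDS$), the canonical divisor class $K_{\overline X}$ is Cartier if and only if its restriction $K_{\widehat X}$ is Cartier; so it suffices to prove that $\widehat X$ is Gorenstein. Note that a priori $K_{\overline X}$ is $\QQ$-Cartier since, by factorial grading (see~\cite{arzhfac}), $\Cl(\overline X)$ is finite, so the real content is to upgrade $\QQ$-Cartier to Cartier, i.e.\ to show the Cartier index is $1$.

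Next I would exploit the quotient presentation $\pi\colon \widehat X \to X$ locally. Over a suitable affine open cover of $X$ trivializing the relevant sheaves, $\pi$ looks like a quotient by $H_X$, and there is a ramification/Riemann--Hurwitz-type formula relating $K_{\widehat X}$ to $\pi^* K_X$. The key point, already used implicitly in the Cox ring literature, is that the quotient map $\widehat X \to X$ is a \emph{Cl(X)-torsor over the smooth locus} (or more precisely, étale in codimension one after removing the appropriate exceptional set), so that $K_{\widehat X} = \pi^* K_X$ up to a divisor supported away from a codimension-two set — in fact one computes $K_{\widehat X} \sim 0$ on $\widehat X$, or at least that $\mathcal{O}_{\widehat X}(K_{\widehat X})$ is equivariantly trivial. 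Concretely: a nowhere-vanishing $H_X$-homogeneous section of the canonical sheaf on $\widehat X$ can be written down from a rational canonical form pulled back from $X$ times a monomial in the Cox coordinates, with the monomial exponents chosen to cancel the $H_X$-weights; this produces an everywhere-defined generator of $\omega_{\widehat X}$, proving $\omega_{\widehat X}$ is invertible and hence $\widehat X$ — and therefore $\overline X$ — is Gorenstein.

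The main obstacle I expect is the torsion in $\Cl(X)$: when $\Cl(X)$ is free, factoriality of $\Cox(X)$ (from~\cite{BerchHaus, fac, arzhfac}) already gives Gorensteinness for free, so the new content is precisely the torsion case, where $H_X$ is a genuine quasitorus with a finite part. Here one must be careful that the good quotient $\widehat X \to X$ is ramified along the locus where the isotropy is nontrivial, and the would-be canonical generator acquires a character of the finite group; the technical heart is to check that the divisor of this character-twisted form is exactly zero, i.e.\ that the ramification contribution is an honest Cartier divisor that can be absorbed into a change of generator. I would handle this by working in the Cox ring formalism of~\cite{arzhfac, coxrings}: use that $\Cox(X)$ is a UFD in the graded sense, pick homogeneous prime generators, and verify on the level of the explicit $\Cl(X)$-graded presentation that the canonical module $\omega_{\Cox(X)}$ is a free rank-one graded module. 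A secondary, more bookkeeping-level obstacle is making the reduction "codimension $\ge 2$ complement $\Rightarrow$ Cartier descends" rigorous for the canonical sheaf on a possibly non-$\QQ$-factorial normal variety; this is standard (reflexive sheaves of rank one extend uniquely across codimension-two subsets, and invertibility is checked on the complement), but it needs to be stated cleanly.
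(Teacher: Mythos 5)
Your overall reduction (work on $\widehat X$, use the quotient presentation $\pi\colon\widehat X\to X$ to compute $K_{\widehat X}$) is a genuinely different route from the paper's, and its core geometric idea is sound; but as written the proposal has three concrete problems. First, the claim that factorial grading forces $\Cl(\overline X)$ to be finite is false: the paper itself records non-$\QQ$-factorial finitely generated Cox rings (e.g.\ $xy+z^aw^b$), which are quasicones with trivial Picard group, hence have \emph{infinite} class group; so $K_{\overline X}$ is not a priori $\QQ$-Cartier and your framing of the problem as ``upgrade $\QQ$-Cartier to Cartier'' is off. Second, the step ``$K_{\overline X}$ is Cartier iff $K_{\widehat X}$ is Cartier'' is wrong in the direction you need: the pushforward of an invertible sheaf across a codimension-two complement is reflexive but need not be invertible at the added points. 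What does pass across codimension two is \emph{principality} ($\Cl(\overline X)=\Cl(\widehat X)$), so you must prove the stronger statement $K_{\widehat X}\sim 0$, not merely that $\omega_{\widehat X}$ is invertible. Third, and most seriously, the decisive step in the torsion case --- that the ``character-twisted'' form has divisor exactly zero --- is not proved: your proposed verification (``check on the graded presentation that $\omega_{\Cox(X)}$ is graded free of rank one'') restates the goal. The worry you raise about ramification where the isotropy is nontrivial is actually a red herring: the $H_X$-action on $\widehat X$ is free over $X_{\reg}$ and the non-free locus has codimension at least two, so there is no ramification divisor at all; the missing ingredient is rather the precise statement that $\div_{\widehat X}(f_D)=\pi^*D$ for the canonical section $f_D$ of each prime divisor $D$ (so that $\pi^*\colon\Cl(X)\to\Cl(\widehat X)$ is the zero map), combined with $\omega_{\widehat X}\cong\pi^*\omega_{X_{\reg}}$ over the principal-bundle locus. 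With those two facts made explicit your argument closes and even yields $K_{\overline X}\sim 0$; without them it is a plan, not a proof.

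For comparison, the paper resolves the torsion case quite differently: it passes to $X_E:=\widehat X/E$ (the quotient by the torsion part $E$ of $H_X$), which by Bechtold's theorem on changing the free part of the grading group has the \emph{same} Cox ring as $X$ and hence finite class group; it then takes the global index-one cover $\mathcal Y\to X_E$ associated to $rK_{X_E}\sim 0$, uses the universal property of the characteristic space to factor $\widehat X\to\mathcal Y\to X_E$, and transfers Cartier-ness of $K_{\mathcal Y}$ to $K_{\widehat X}$ along the finite morphism $\widehat X\to\mathcal Y$, which is \'etale in codimension one. That route trades your explicit divisor computation for the universal property of the Cox construction plus the standard index-one-cover machinery; it also sets up the $\widehat X/E$ device that the paper reuses for the iteration results. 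If you want to keep your direct approach, you should delete the two false supporting claims and supply the canonical-section computation of $\div_{\widehat X}(f_D)$; otherwise the argument does not yet establish the theorem.
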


As Brown points out~\cite{brown}, a finitely generated Cox ring need not be Cohen Macaulay. But since $K_{\overline{X}}$ is Cartier, if e.g. $\Cox(X)$ has rational singularities, it is not only Cohen Macaulay, but even canonical. 

Recall that a \emph{quasicone} is an affine variety with a $\CC^*$-action such that all closures of $\CC^*$-orbits meet in one common point, the \emph{vertex}. We will call the coordinate ring of such affine variety a quasicone as well. In fact, for an $\MDS$ that is either complete or itself  a quasicone, $\Cox(X)$ is a quasicone, cf. Proposition~\ref{prop:quasicone2}.

By~\cite[Le.~5.1]{murthy}, quasicones have trivial Picard group  (normality is essential here, see Remark~\ref{rem:PicNorm}), yet the whole class group is generated by the local class group at the vertex and $K_X$ is even trivial if $X$ is Gorenstein. 
In fact, $\klt$ quasicones turn out to be the affine counterparts of Fano type varieties, so we can refine the characterization from~\cite{gongyo, brown, kaw}:

\begin{introthm}
\label{thm:charsing}
Let $X$ be projective (affine). Then it is of Fano type (a $\klt$ quasicone) if and only if $\Cox(X)$ is a Gorenstein canonical quasicone.
\end{introthm}

Note that the property 'Fano type / $\klt$ quasicone' does not only guarantee $\MDS$-ness, but is also \emph{preserved} when passing to the total coordinate space $\overline{X}$. Thus it is natural to consider the Cox ring and total coordinate space $X^{(2)}:=\doubleoverline{X}$ of $X^{(1)}:=\overline{X}$, then the ones of $X^{(2)}$ etc., we call this the \emph{iteration of Cox rings}. 

Iteration of Cox rings has been introduced by Arzhantsev, Hausen, Wrobel and the author in~\cite{ltpticr} for affine rational varieties with a torus action of complexity one.
In general, for an $\MDS$ $X$, three scenarios are possible: either $X$ has \emph{infinite} iteration of Cox rings, or at some point $X^{(N)}$, the iteration stops. The latter may have two reasons: either $X^{(N)}$ is non-$\MDS$ or it is factorial, i.e. its own total coordinate space. We call $\CC[X^{(N)}]$ the \emph{master Cox ring} of $X$.

It was shown in~\cite{ltpticr} that log terminal quasicones with a torus action of complexity one have finite iteration of Cox rings with factorial master Cox ring. In~\cite{hauswrob}, Hausen and Wrobel determined which non-quasicone affine rational varieties with a torus action of complexity one have finite iteration with factorial master Cox ring. In fact, it can be deduced from their observations that \emph{all} rational varieties with a torus action of complexity one have \emph{finite} iteration of Cox rings, though with possibly non-$\MDS$ master Cox ring. 
Toric varieties - in a trivial way, since $\Cox(X)$ is  a polynomial ring - and also spherical varieties - see~\cite[Thm.~1.1]{gagliardi} - have factorial master Cox ring. We prove the following:

\begin{introthm}
\label{thm:CRI}
Let $X$ be of Fano type or a $\klt$ quasicone. Then $X$ has finite iteration of Cox rings with factorial master Cox ring.
\end{introthm}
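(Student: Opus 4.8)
The plan is to reduce the iteration statement to a finiteness argument about invariants that are known to be preserved under the Cox ring construction. By Theorem~\ref{thm:charsing}, if $X$ is of Fano type (resp.\ a $\klt$ quasicone), then $\Cox(X) = \CC[\overline{X}]$ is a Gorenstein canonical quasicone, and applying Theorem~\ref{thm:charsing} again in the affine direction, the property ``Gorenstein canonical quasicone'' is itself preserved when passing to the total coordinate space. So the whole iteration $X^{(1)}, X^{(2)}, \ldots$ consists of Gorenstein canonical quasicones, each of which is an $\MDS$, and none of the $X^{(N)}$ can fail to be an $\MDS$. Thus the only way the iteration can terminate is with a factorial master Cox ring, and the only thing left to prove is that it does terminate; infinite iteration must be excluded.

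To bound the length of the iteration, I would track a discrete invariant that strictly decreases along $X^{(i)} \rightsquigarrow X^{(i+1)}$ as long as $X^{(i)}$ is not already factorial. The natural candidate is the rank of the divisor class group together with information about its torsion, packaged so as to detect factoriality: recall that $\Cl(\overline{X})$ is the torsion part of $\Cl(X)$ when $\Cl(X)$ is finitely generated, more precisely the relation between $\Cl(\overline{X})$ and $\Cl(X)$ runs through the characteristic quasitorus $H_X = \Spec\CC[\Cl(X)]$ and the exact sequences governing the characteristic space $\widehat{X} \subseteq \overline{X}$. Concretely, passing from $X$ to $\overline{X}$ replaces $\Cl(X)$ by a group that is ``smaller'': the free rank drops to the rank of a subgroup cut out by the relations among the chosen generators, and a purely torsion class group eventually forces factoriality. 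I would make this precise using the Gorenstein hypothesis: because $K_{\overline{X}}$ is Cartier and trivial (the vertex argument via \cite[Le.~5.1]{murthy} and the remarks following it), the class group is generated by the local class group at the vertex, and one gets effective control on how $\Cl$ shrinks. The key lemma to isolate and prove is therefore: \emph{if $X$ is a Gorenstein canonical quasicone that is not factorial, then the appropriately-defined complexity of $\Cl(\overline{X})$ is strictly smaller than that of $\Cl(X)$}, with the complexity taking values in a well-ordered set. Iterating then reaches $0$ after finitely many steps, at which point $X^{(N)}$ is factorial.

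The main obstacle I expect is the torsion bookkeeping. When $\Cl(X)$ has a free part, it is classical that $\Cox(X)$ is factorial, so the free rank of the class group does strictly control the process in the torsion-free case; the delicate point is that $\Cl(\overline{X})$ is generically \emph{torsion} (it is the torsion of $\Cl(X)$ plus whatever new torsion the relations introduce), and a finite nontrivial torsion class group need not immediately imply factoriality, nor is it obvious a priori that its order strictly decreases. Here I would lean on the factorially graded structure of $\Cox(X)$ from \cite{arzhfac} together with Gorensteinness to show that the only homogeneous units are the expected ones and that nontrivial torsion in $\Cl$ genuinely produces non-principal prime divisors, so that one more Cox construction strictly reduces, say, the order of the torsion group — using that a quasicone has trivial Picard group, so that torsion in $\Cl$ cannot be ``absorbed'' by line bundles. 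Combining the free-rank descent and the torsion-order descent into a single lexicographic invariant on $\NN \times \NN$ then gives the finiteness. Finally, once the iteration stabilizes at a factorial $X^{(N)}$, that ring \emph{is} the master Cox ring by definition, and it is factorial by construction, completing the proof of Theorem~\ref{thm:CRI}.
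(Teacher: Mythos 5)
Your opening paragraph is correct and agrees with the paper: by Theorem~\ref{thm:charsing} every iterate $X^{(i)}$ is again a Gorenstein canonical quasicone, hence an $\MDS$, so a non-$\MDS$ master Cox ring is excluded and the whole content of the theorem is finiteness of the iteration. The gap is in your finiteness argument. The key lemma you isolate --- that some complexity of the class group, built from free rank and torsion order, strictly decreases under $X\rightsquigarrow\overline{X}$ --- is false. Already for log terminal surface singularities $X=\CC^2/\Gamma$ with $\Gamma\subseteq\GL_2(\CC)$ finite and acting freely off the origin, the iteration realizes the derived series of $\Gamma$, and $\Cl(X^{(i)})$ is the abelianization of the $i$-th derived subgroup. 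For the $E_7$-singularity (binary octahedral group) the chain of singularities is $E_7\to E_6\to D_4\to A_1\to\CC^2$ with class groups $\ZZ/2\ZZ$, $\ZZ/3\ZZ$, $(\ZZ/2\ZZ)^2$, $\ZZ/2\ZZ$, $0$, see~\cite[Ex.~4.8]{ltpticr}; the torsion order runs $2,3,4,2,1$ and is not monotone in any ordering compatible with your lexicographic scheme. The free rank cannot help either: it plays no role after the first step, since a free class group would make the Cox ring factorial outright. So no elementary class-group bookkeeping of the kind you propose can terminate the induction.

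The finiteness genuinely requires a deep input, and this is where the paper goes a different way. Using Lemma~\ref{le:TandE}~(ii), each quasitorus quotient $X^{(i+1)}\to X^{(i)}$ is replaced by the \emph{finite} Galois quotient $\Xendl_{i+1}\to\Xendl_i$ obtained by dividing out the torus part of the characteristic quasitorus; this produces a tower of finite Galois covers of $\klt$ quasicones which are \'etale in codimension one but each ramified over the vertex. The theorem of Greb--Kebekus--Peternell~\cite[Thm.~1.1]{grebkebpet} then forces all but finitely many of these covers to be \'etale, contradicting infinite iteration. In effect the relevant ``decreasing invariant'' is the \'etale local fundamental group of the vertex and the finiteness of its derived series, which is exactly what the $E_7$ example above illustrates and what \cite{grebkebpet} (compare also \cite{Xu}) controls; it is not visible from $\Cl$ alone. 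To repair your argument you would need to replace the class-group invariant by this fundamental-group datum, at which point you are reproving the paper's argument.
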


Note that for any $i\leq N$, the quasitorus quotients in the iteration of Cox rings can be combined to express $X$ as a $\GIT$-quotient ${X^{(i)}}^{\mathrm{ss}}\git G$ of $X^{(i)}$ by a solvable group $G$, see Proposition~\ref{prop:facquot}. In the case of log terminal surface singularities, this yields the presentation as finite quotients of $\CC^2$ and, in addition, the derived normal series of the respective finite groups, see~\cite{ltpticr}.
Note that our observations till now guarantee $\MDS$-ness of all $X^{(i)}$ in the iteration of Cox rings for $\klt$ quasicones and thus Fano type varieties, but they \emph{do not yield finiteness} of the iteration. To prove this, we use a theorem of Greb, Kebekus and Peternell~\cite[Thm.~1.1]{grebkebpet}, stating (roughly) that in a sequence of finite morphisms between $\klt$ spaces that are \'etale over smooth loci all but finitely many must be \'etale. 


In order to apply this result, we need to pass from in general nonfinite quasitorus quotients to finite Galois covers. This is made possible by the observation made in Corollary~\ref{cor:TandE}, which we state here as Lemma~\ref{le:TandE} as it is essential for most of our statements, including Theorem~\ref{thm:Gor} on Gorensteinness of the Cox ring. For convenience, we will mostly depict torus quotients by vertical, finite quotients by horizontal and quasitorus quotients by diagonal arrows.

\begin{introlem}\label{le:TandE}
Let $X$ be an $\MDS$ and $H_X= E \times \TT$ its characteristic quasitorus with torsion and torus part $E$ and $\TT$ respectively. Then:
\begin{enumerate}
\item The Cox rings of $X$ and of the geometric quotient $X_E:=\widehat X / E$ coincide. In particular, $X_E$ is a quasiaffine $\MDS$ and  almost factorial.
\item The characteristic space $\widehat{X}$ is an $\MDS$ if and only if $X_\TT:=\widehat X \git \TT$ is an $\MDS$ and in that case, their Cox rings coincide.
\end{enumerate}
In particular, we have the following commutative diagram of quasitorus quotients, where $\CR$ denotes characteristic spaces:
$$
 \xymatrix@R=1.5em{ 
{\widehat{X}_\TT=\doublewidehatX{X}}
\ar@{-->}[rd]^{\CR}
\ar@{-->}[rdd]_{\CR}
\\
& {\widehat{X}}
\ar[r]^{\CR}
\ar[d]
\ar[rd]^{\CR}
&
X_E
\ar[d]
\\
&
X_\TT
\ar[r]
&
X
}
$$
\end{introlem}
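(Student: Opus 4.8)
The plan is to identify $\Cox(X_E)$ and $\Cox(\widehat X)$ with rings of global regular functions — on $\widehat X$, respectively on the common characteristic space $\doublewidehatX{X}$ — and to control the intermediate quotients through the codimension-two behaviour of the $H_X$-action on $\widehat X$. Write $\Cl(X)=\Cl(X)_{\mathrm{tors}}\oplus\ZZ^r$, so that $E=\Spec\CC[\Cl(X)_{\mathrm{tors}}]$ is finite and $\TT=(\CC^*)^r$. The decisive preliminary fact is that $\widehat X\to X$ is an $H_X$-torsor over the locally factorial locus of $X$, whose complement — $X$ being normal — lies in $\Sing(X)$ and hence has codimension at least two; since $\widehat X\to X$ has all fibres of dimension $\dim H_X$, the locus in $\widehat X$ with non-trivial $H_X$-isotropy has codimension at least two as well. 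In particular the $E$-action on $\widehat X$ has no pseudo-reflections, and the $\TT$-action has trivial generic isotropy.

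For~(i): since $E$ is finite, $X_E=\widehat X/E$ is a geometric quotient, and it is quasiaffine because $\widehat X$ is an open subset of the affine variety $\overline X=\Spec\Cox(X)$. As $E$ acts without pseudo-reflections, the quotient map $\widehat X\to X_E$ is quasi-\'etale and $X_E$ is normal; the recognition criterion for characteristic spaces [cf.\ \cite{coxrings}] then exhibits $\widehat X$ as the characteristic space of $X_E$, with characteristic quasitorus $E$ and $\Cl(X_E)\cong\Cl(X)_{\mathrm{tors}}$, once one checks that the $\Cl(X)_{\mathrm{tors}}$-grading induced on $\Gamma(\widehat X,\OOO_{\widehat X})$ by the $E$-action has the structural properties of a Cox ring — which it inherits from the $\Cl(X)$-graded ring $\Cox(X)$. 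Hence
$$
\Cox(X_E)=\Gamma(\widehat X,\OOO_{\widehat X})=\Gamma(\overline X,\OOO_{\overline X})=\Cox(X),
$$
the middle equality because $\overline X$ is normal with $\overline X\setminus\widehat X$ of codimension at least two. Finite generation of $\Cox(X_E)=\Cox(X)$ makes $X_E$ a quasiaffine $\MDS$, and finiteness of $\Cl(X_E)$ makes it almost factorial.

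For~(ii): composing quotients gives $X=(\widehat X\git\TT)\git E=X_\TT/E$, so $X_\TT\to X$ is the quasi-\'etale $E$-cover attached to the torsion of $\Cl(X)$; it trivialises that torsion, whence $\Cl(X_\TT)\cong\Cl(X)/\Cl(X)_{\mathrm{tors}}$ is free, $H_{X_\TT}=\TT$, and $\Cox(X_\TT)$ is factorial by \cite{BerchHaus,fac,arzhfac}. It then remains to establish that the torus quotient $\widehat X\to X_\TT=\widehat X\git\TT$ is compatible with the Cox ring construction, in the sense that $\widehat X$ and $X_\TT$ share a characteristic space, namely $\widehat{\widehat X}=\doublewidehatX{X}$. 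Granting this, $\Cox(\widehat X)=\Gamma(\doublewidehatX{X},\OOO_{\doublewidehatX{X}})=\Cox(X_\TT)$ — the last equality using factoriality of $\Cox(X_\TT)$ — and in particular $\widehat X$ is an $\MDS$ exactly when $X_\TT$ is. Finally, the displayed diagram commutes because $\widehat X$, $X_E$, $X_\TT$ and $X$ are the good quotients of $\widehat X$ by the commuting subgroups $\{1\}$, $\TT$, $E$ and $H_X$ of $H_X$, while $\doublewidehatX{X}$ dominates all of them through iterated characteristic quotients.

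The step I expect to be the main obstacle is precisely this compatibility of the Cox ring construction with the torus quotient $\widehat X\leftrightarrow X_\TT$, which simultaneously carries the ``if and only if'' in~(ii) and the equality of the two Cox rings. The other point requiring care is the torsion bookkeeping in~(i): one must use the absence of pseudo-reflections for $E$ — itself a consequence of the codimension-two fact above — to keep $\widehat X\to X_E$ quasi-\'etale, so that the induced $\Cl(X)_{\mathrm{tors}}$-grading stays factorial and $X_E$ really is recognised as a characteristic space.
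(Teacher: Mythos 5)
There are two genuine problems, one in each part. In~(i), the entire weight of the argument rests on your parenthetical claim that the $\Cl(X)_{\mathrm{tors}}$-grading of $\Gamma(\widehat X,\OOO_{\widehat X})$ ``inherits'' the structural properties of a Cox ring from the $\Cl(X)$-grading. Coarsening a factorial grading does \emph{not} in general preserve factoriality of the grading (the extreme coarsening, to the trivial grading, would force $\Cox(X)$ to be a UFD); the statement is true here only because the part of the grading group being discarded is \emph{free}, and that is exactly Bechtold's theorem \cite[Thm.~1.5]{Bechtold}, the key external input of the whole lemma. You instead attribute the survival of factoriality to the absence of pseudo-reflections for $E$, but that codimension-two condition only controls the geometric side (that $\widehat X\to X_E$ is an almost principal bundle in the sense of Definition~\ref{def:hash}); it says nothing about the algebraic factoriality of the coarsened grading, which is what the recognition theorem \cite[Thm.~1.6.4.3]{coxrings} actually requires. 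So the step is not wrong, but the idea that makes it work is missing.

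In~(ii) there is a step that fails outright: the claim that $X_\TT\to X$ ``trivialises the torsion, whence $\Cl(X_\TT)\cong\Cl(X)/\Cl(X)_{\mathrm{tors}}$ is free and $\Cox(X_\TT)$ is factorial.'' A quasi-\'etale $E$-cover kills the \emph{pullbacks} of the torsion classes of $\Cl(X)$, but $\Cl(X_\TT)$ can contain new classes and new torsion; if your claim were true, then combined with~(i) every $\MDS$ would have Cox ring iteration terminating after at most two steps with factorial master Cox ring, contradicting the longer chains in \cite{ltpticr} (already for log terminal surface singularities, where $\TT$ is trivial, $X_\TT=\widehat X$ and $\Cl(\widehat X)$ is a nontrivial finite group whenever the derived series of the corresponding subgroup of $\SL_2(\CC)$ has length greater than two). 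With that premise gone, your deduction $\Cox(\widehat X)=\Cox(X_\TT)$ collapses, and the remaining content of~(ii) is exactly the statement you defer with ``granting this.'' The paper instead obtains~(ii) directly from Proposition~\ref{prop:relMDS}~(i) applied to the almost principal $\TT$-bundle $\widehat X\to X_\TT$: one enlarges or shrinks the grading group by the free factor $\Chi(\TT)$ using \cite[Thm.~1.5]{Bechtold} and then applies \cite[Thm.~1.6.4.3]{coxrings} and \cite[Thm.~4.2.1.4]{coxrings}; no claim about freeness of $\Cl(X_\TT)$ is needed or true.
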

Here, (i)  is essential in the proof of Theorem~\ref{thm:Gor} as it allows to reduce to the case of finite $\Cl(X)$ while (ii) allows to pass from quasitorus quotients to finite Galois covers in the proof of Theorem~\ref{thm:CRI}.
The first main ingredience of Lemma~\ref{le:TandE} is a theorem of Bechtold~\cite[Thm.~1.5]{Bechtold}, stating that \emph{factoriality of a grading} (which is essential for Cox rings) is preserved when changing the \emph{free} part of the grading group.  
The second underlying observation is that if an $\MDS$ $X$ has a \emph{quotient presentation}  $X=Y\git H$ with a variety $Y$ and a quasitorus $H$ with certain good properties, see~\cite[Def.~4.2.1.1]{coxrings}, then the presentation $\widehat{X}\to X$ factors through $Y \to X$, see~\cite[Thm.~4.2.1.4]{coxrings}. In analogy to~\cite{coxrings}, the term quotient presentation will be used only for abelian quotients in the following.

In view of this, consider Okawa's  Theorem~\cite[Thm.~1.1]{okawa}, which states that for a surjective morphism $f \colon X \to Y$ between projective varieties, if $X$ is an $\MDS$ then $Y$ is as well. We can in fact say more if $f$ is a (quasitorus) quotient presentation, namely if $Y$ and $\widehat{Y}$ are $\MDS$, then $X$ is so, see Proposition~\ref{prop:relMDS}. In terms of iteration of Cox rings, this generalizes to the following:

\begin{introthm}
\label{thm:webCRI}
Let 
$
~
\cdots\!\to \! X_4 \! \to \! X_3 \! \to \! X_2 \! \to \! X_1
$ be a chain of quotient presentations.
Denote the $i$-th iterated characteristic space of $X_j$ by $X_j^{(i)}$ if it exists, i.e. $X_j^{(1)}:=\widehat{X}_j$ etc. Then if one of the $X_i$ has infinite iteration of Cox rings, the others have as well. If one has factorial master Cox ring, the others have as well and in that case, all master Cox rings coincide. In these cases, we get a \emph{web of Cox ring iterations}:
$$
 \xymatrix@C=0.5em@R=-0.5em@H=\dhatheightt@W=\Ywidth{ 
&\ar@{..>}[rdd] 
 \\
 \ar@{..>}[rrd]&&&\ar@{..>}[rdd]
 \\&&
 X_4^{(3)}
 \ar[rrd]
\ar[rdd]
&&&\ar@{..>}[rdd]
 \\
  & \ar@{..>}[rrd]&&&
   X_3^{(3)}
\ar[ld]
 \ar[rrd]
\ar[rdd]&&&
\ar@{..>}[rdd]
 \\
&&&X_4^{(2)}
 \ar[rrd]
\ar[rdd]
&&& X_2^{(3)}
\ar[ld]
 \ar[rrd]
\ar[rdd]
\\
&&\ar@{..>}[rrd]&&&X_3^{(2)}
\ar[ld]
 \ar[rrd]
\ar[rdd]&&  &
X_1^{(3)}
 \ar[ld]
\ar[rdd]
 \\
&&&&X_4^{(1)}
\ar[rrd]
\ar[rdd]&&& 
X_2^{(2)}
\ar[ld]
 \ar[rrd]
\ar[rdd]
\\
&&&\ar@{..>}[rrd]&&&X_3^{(1)}
\ar[ld]
\ar[rrd]
\ar[rdd]&& &
X_1^{(2)}
 \ar[ld]
\ar[rdd]
 \\
&&&&& X_4
\ar[rrd]
&&& X_2^{(1)}
\ar[ld]
\ar[rrd]
\ar[rdd]
\\
&&&&&&&
X_3
\ar[rrd]
&& & 
X_1^{(1)}
\ar[ld]
\ar[rdd]
\\
&&&&&&&&& 
X_2
\ar[rrd]
\\
&&&&&&&&&&&
X_1
}
$$
\end{introthm}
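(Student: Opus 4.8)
The plan is to deduce the statement by iterating Proposition~\ref{prop:relMDS} together with the factorization property of characteristic spaces, first for a single quotient presentation and then along the whole chain; throughout, all the $X_j$ are understood to be $\MDS$. Two facts will be used repeatedly. By \cite[Thm.~4.2.1.4]{coxrings} (and the definition of a quotient presentation), for any quotient presentation $p\colon Y\to Z$ of an $\MDS$ $Z$ the characteristic space map $\widehat Z\to Z$ factors as $\widehat Z\to Y\to Z$ with $\widehat Z\to Y$ again a quotient presentation of $Y$; moreover, every quotient presentation of a variety $W$ is the good quotient of $\widehat W$ by a subgroup of its characteristic quasitorus $H_W$. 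Secondly, Proposition~\ref{prop:relMDS}: if $Y\to Z$ is a quotient presentation of an $\MDS$ $Z$ and $\widehat Z$ is an $\MDS$, then $Y$ is an $\MDS$. From the first fact I extract the key observation: a quotient presentation $W\to V$ with $W$ \emph{factorial} is already the characteristic space $\widehat V$ of $V$, because $\Cl(W)=0$ makes $H_W$ trivial, so the only quotient presentation of $W$ is the identity; hence $V$ has finite iteration of Cox rings with master Cox ring $\CC[W]$. When torsion is present in the class groups, I first apply Lemma~\ref{le:TandE} to split each quotient presentation into its torus and finite parts, so that these arguments go through verbatim.

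Now fix a single quotient presentation $Y\to Z$ of an $\MDS$ $Z$, put $Z^{(0)}=Z$, $Y^{(0)}=Y$, and apply the factorization property alternately: wherever the iterates exist, this produces a chain of quotient presentations
$$
\cdots \to Y^{(2)} \to Z^{(2)} \to Y^{(1)} \to Z^{(1)} \to Y^{(0)} \to Z^{(0)},
$$
with $Z^{(i)}=\widehat{Z^{(i-1)}}$, $Y^{(i)}=\widehat{Y^{(i-1)}}$. Existence of the iterates is handled by a simultaneous induction: as soon as $Z^{(i)}$ and $Z^{(i+1)}$ are $\MDS$, Proposition~\ref{prop:relMDS} applied to the quotient presentation $Y^{(i)}\to Z^{(i)}$ shows $Y^{(i)}$ is an $\MDS$; symmetrically $Z^{(i)}$ is an $\MDS$ once $Y^{(i-1)}$ and $Y^{(i)}$ are. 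Hence $Z$ has infinite iteration of Cox rings if and only if $Y$ does. If $Z$ instead has finite iteration with first factorial iterate $Z^{(N)}$, then (treating $N=0$ directly via the observation, since then $Y=Z$) for $N\geq1$ the iterates $Z^{(N-1)}$ and $Z^{(N)}$ are $\MDS$, hence so is $Y^{(N-1)}$, and the quotient presentation $Z^{(N)}\to Y^{(N-1)}$ with $Z^{(N)}$ factorial forces $Z^{(N)}=\widehat{Y^{(N-1)}}=Y^{(N)}$ by the observation; so $Y$ too has finite iteration, with the \emph{same} master Cox ring $\CC[Z^{(N)}]$. As $Y$ and $Z$ play symmetric roles, all these implications are equivalences.

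Finally, apply this two-term analysis to every link $X_{j+1}\to X_j$ of the given chain. The property of having infinite iteration of Cox rings, and likewise that of having finite iteration with factorial master Cox ring, then passes between $X_{j+1}$ and $X_j$ in both directions, the master being unchanged in the second case; since the chain is connected, one $X_i$ having one of these properties forces all $X_j$ to have it, with all master Cox rings coinciding. This establishes the first two assertions. In either of these cases every iterate $X_j^{(i)}$ exists, and overlaying the zigzag chains of the two-term analysis over all links gives the asserted web: the node $X_j^{(i)}$ carries quotient presentations $X_j^{(i)}\to X_{j-1}^{(i)}$ and $X_j^{(i)}\to X_{j+1}^{(i-1)}$, and the composite $X_j^{(i+1)}\to X_{j+1}^{(i)}\to X_j^{(i)}$ is the characteristic space map of $X_j^{(i)}$; commutativity of the cells is the uniqueness of the factorizations of \cite[Thm.~4.2.1.4]{coxrings}. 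The principal difficulty is organizational rather than conceptual: one must run the simultaneous induction so that each iterated characteristic space is known to be an $\MDS$ before it is used, and one must verify that the factor of a quotient presentation is again a quotient presentation --- which is exactly where the reduction via Lemma~\ref{le:TandE} to the torsion-free and the finite case enters.
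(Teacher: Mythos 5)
Your proposal is correct and follows essentially the same route as the paper: the two‑term zigzag analysis you carry out for a single quotient presentation $Y\to Z$ is precisely the paper's Proposition~\ref{prop:relICR} (proved there via the ladder from Proposition~\ref{prop:relMDS} and the factorization theorem \cite[Thm.~4.2.1.4]{coxrings}, with the factorial master Cox ring identified exactly as in your ``key observation''), and the web theorem is then obtained, as you do, by chaining this along the links. The only cosmetic difference is that the paper splits off the torsion‑free case via Proposition~\ref{prop:relMDS}(i) rather than invoking Lemma~\ref{le:TandE}, which changes nothing of substance.
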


Note that if for example $X_1$ has finite iteration of Cox rings with non-$\MDS$ master Cox ring $\CC[X_1^{(3)}]$, we get $\MDS$-ness only for $X_2, X_2^{(1)}, X_3$. So in such case, the maximal number of steps in the Cox ring iteration becomes important. 

Due to~\cite[Thm. 4.2.1.4]{coxrings}, the characteristic space $\widehat{X} \to X$ is a universal reductive abelian (possibly nondiscrete) 'cover' of $X$ in the sense that it factors through any quotient presentation $Y \to X$. This can be generalized for $\MDS$ with finite iteration of Cox rings and factorial master Cox ring in the following way.

\begin{introcor}
\label{cor:unisolv}
Let $X$ be an $\MDS$ with finite iteration of Cox rings and factorial master Cox ring. Let also $X=Y \git G$ be a quotient of a normal variety $Y$ by a solvable reductive group $G$, such that $Y$ has only constant invertible $G$-invariant functions and $G$ acts freely on a subset with complement of codimension at least two in $Y$.
Then the quotient $(X^{(N)})^{\mathrm{ss}} \to X$, where $(X^{(N)})^{\mathrm{ss}}$ denotes the set of semistable points in $X^{(N)}$, factors through $Y \to X$. We call $(X^{(N)})^{\mathrm{ss}} \to X$ the \emph{universal solvable quotient presentation} of $X$.
\end{introcor}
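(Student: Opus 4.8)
The plan is to bootstrap from the universality of a single quotient presentation, established in \cite[Thm.~4.2.1.4]{coxrings}, and to iterate it along the tower $X^{(N)}\to\cdots\to X^{(1)}\to X$ while simultaneously interpreting the intermediate composite as a genuine quotient presentation. The first step is to organize the hypotheses on $Y\to X$ so that they match the ones needed to apply \cite[Thm.~4.2.1.4]{coxrings}: $G$ solvable reductive means $G\cong E'\times\TT'$ with $E'$ finite and $\TT'$ a torus, and the assumptions that $Y$ carries only constant invertible $G$-invariant functions and that $G$ acts freely in codimension one are exactly what \cite[Def.~4.2.1.1]{coxrings} demands of a quotient presentation (for the abelian group $G$). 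So the map $Y\to X$ is a quotient presentation in the sense we need, and by \cite[Thm.~4.2.1.4]{coxrings} the characteristic space $\widehat X\to X$ factors through it: there is a morphism $\widehat X\to Y$ over $X$.

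The key point is then to replace $\widehat X$ by $(X^{(N)})^{\mathrm{ss}}$. Here I would use that each step $X^{(j+1)}\to X^{(j)}$ in the iteration is, up to restricting to the semistable locus, a quotient presentation by the characteristic quasitorus $H_{X^{(j)}}$, and that by Proposition~\ref{prop:facquot} (and the discussion following Theorem~\ref{thm:CRI}) the composite of all these quotients realizes $X$ as a GIT quotient $(X^{(N)})^{\mathrm{ss}}\git G_N$ by a solvable reductive group $G_N$ (the iterated extension of the quasitori $H_{X^{(i)}}$). Since the master Cox ring $\CC[X^{(N)}]$ is factorial, $X^{(N)}$ has trivial class group, no nonconstant invertible functions on the relevant locus, and $G_N$ acts freely in codimension one there; so $(X^{(N)})^{\mathrm{ss}}\to X$ is itself a quotient presentation of $X$ by the solvable group $G_N$. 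Now apply \cite[Thm.~4.2.1.4]{coxrings} (or, more precisely, the refinement for solvable rather than merely abelian groups that underlies Proposition~\ref{prop:relMDS} and Corollary~\ref{cor:unisolv}'s own statement): a quotient presentation by a solvable reductive group factors through any other quotient presentation $Y\to X$ satisfying the same freeness and unit hypotheses. This yields the desired factorization $(X^{(N)})^{\mathrm{ss}}\to Y\to X$.

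To make the last application legitimate one must check that the universality statement of \cite[Thm.~4.2.1.4]{coxrings}, which is phrased for the \emph{abelian} characteristic-space presentation, indeed extends to the solvable presentation $(X^{(N)})^{\mathrm{ss}}\to X$. I would do this by a downward induction on $j$: the composite $(X^{(N)})^{\mathrm{ss}}\to X^{(j)}$ is a solvable quotient presentation of $X^{(j)}$, and assuming it factors through every abelian quotient presentation of $X^{(j)}$, one lifts the morphism $\widehat{X^{(j-1)}}\to Y^{(j-1)}$ (for $Y^{(j-1)}$ the base change of $Y$) step by step. The inductive mechanism is exactly the one used implicitly in the proof of Proposition~\ref{prop:relMDS}, combined with Lemma~\ref{le:TandE}(ii) to handle the torus part and reduce the torsion part to finite covers where \cite[Thm.~4.2.1.4]{coxrings} applies verbatim. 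In the base step $j=N$ the factorization is trivial since $\CC[X^{(N)}]$ is factorial, so $X^{(N)}$ admits no nontrivial abelian quotient presentation.

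The main obstacle I expect is precisely this passage from the abelian to the solvable case: \cite[Thm.~4.2.1.4]{coxrings} is a statement about quasitorus (hence abelian) quotients, and one must argue that an arbitrary solvable reductive $G$ acting on a normal $Y$ with the stated properties still admits a factorization through it. The cleanest route is to filter $G$ by its derived series, whose successive quotients are abelian reductive (hence quasitori), and to apply the abelian universality at each stage, using at each step that the intermediate quotient $Y\git[G,G]\to X$ inherits the unit- and codimension-hypotheses needed to feed the next application. Verifying that these hypotheses propagate through each quotient $Y\to Y\git[G,G]$ — in particular that no new nonconstant invertible invariant functions appear and that the freeness in codimension one is preserved — is the technical heart; it is where the solvability (as opposed to mere reductivity) of $G$ is genuinely used, and where one must be careful with torsion in the class groups of the intermediate quotients. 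Everything else is bookkeeping along the diagram of Theorem~\ref{thm:webCRI}.
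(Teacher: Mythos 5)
Your opening step contains a genuine error. A solvable reductive group is \emph{not} in general a quasitorus: the identity component is a torus, but the component group can be any finite solvable group, and nonabelian finite solvable $G$ is precisely the case the corollary is designed to cover (if $G$ were always of the form $E'\times\TT'$, the statement would be subsumed by a single application of \cite[Thm.~4.2.1.4]{coxrings}). Consequently $Y\to X$ is \emph{not} a quotient presentation in the sense of \cite[Def.~4.2.1.1]{coxrings}, that theorem does not apply to it, and the factorization $\widehat X\to Y$ you assert in the first paragraph simply fails in general: for $X=\CC^2/G$ with $G$ finite solvable nonabelian and $Y=\CC^2$, the characteristic space $\widehat X$ is an open subset of $\CC^2/[G,G]$ and admits no morphism to $\CC^2$ over $X$. (The remark following the corollary in the paper warns of exactly this asymmetry between solvable and abelian quotients.) The middle of your argument then tries to repair this by postulating a ``solvable refinement'' of \cite[Thm.~4.2.1.4]{coxrings} and running a downward induction to establish it --- but that refinement is essentially the statement to be proved, so as written this part is circular.

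The correct route is the one you only reach in your final paragraph, and it makes the rest of your argument unnecessary: refine $Y\to X$ along the derived (or any normal) series of $G$ into a chain $Y\to\cdots\to X_2\to X$ whose steps are quotients by the abelian reductive, hence quasitorus, subquotients of $G$; after checking that the hypotheses (only constant invertible homogeneous functions, freeness off a subset of codimension at least two) descend to the intermediate quotients, each step is a quotient presentation. Now apply Theorem~\ref{thm:webCRI} to this chain: since $X$ has finite iteration of Cox rings with factorial master Cox ring, every member of the chain, in particular $Y$, does as well, and all master Cox rings coincide with $\mathfrak{R}(X)=\CC[X^{(N)}]$. The diagonal arrows of the resulting web give the morphisms $(X^{(N)})^{\mathrm{ss}}\to\cdots\to Y$ over $X$, which is the desired factorization. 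This is exactly the paper's proof; your instinct that the hypothesis propagation along $Y\to Y\git[G,G]\to\cdots$ is the only point requiring verification is sound, but the abelian-to-solvable passage is handled entirely by the normal series plus the web theorem, not by upgrading \cite[Thm.~4.2.1.4]{coxrings} itself.
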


Note that this does not mean in general that a solvable quotient $Y \to X$ factors through $\widehat{X} \to X$, compare~\cite[\S 3]{ArGa} for the affine case. Moreover,  if $X$ has non-$\MDS$ master Cox ring, difficulties as with Theorem~\ref{thm:webCRI} arise.

\subsection*{Acknowledgements}
The author wishes to thank Giuliano Gagliardi, Helmut Hamm, Mitsuyasu Hashimoto, J\'anos Koll\'ar, Scott Nollet, Ivo Radloff and Karl Schwede for helpful comments. In particular, he is grateful to J\"urgen Hausen and Stefan Kebekus for stimulating discussions.

\setcounter{tocdepth}{1}
\tableofcontents

\section{Preliminaries}
\label{sec:prelim}

In this section, we shortly discuss notions and definitions relevant for the article, where for a detailed discussion we refer to~\cite{HuKeel, BerchHaus, coxrings, McKer}. Further definitions are given directly in the respective sections.

Notions for singularities of pairs are standard, we refer to~\cite{kollarmori}. A variety of \emph{Fano type} is a projective variety $X$, such that there exists an effective $\QQ$-divisor $\Delta$ with $(X,\Delta)$ $\klt$ and $-(K_X+\Delta)$ ample.

We use the following notions concerning the Picard and divisor class group: if $\Cl(X)$ is trivial (torsion), we say $X$ is factorial (almost factorial). If $\Cl(X)/\Pic(X)$ is trivial (torsion), we say that $X$ is locally factorial ($\QQ$-factorial). 

Our object of interest are Mori Dream Spaces ($\MDS$), which for us will be normal algebraic varieties with a finitely generated divisor class group and finitely generated Cox ring
$$
\Cox(X):=\bigoplus_{\Cl(X)} \Gamma(X,\mathcal{O}_X(D)).
$$
The original definition~\cite[Def.~1.10]{HuKeel} of Hu and Keel was in terms of a good behaviour with respect to the minimal model program, but they showed in~\cite[Prop.~2.9]{HuKeel} that this is equivalent to a finitely generated Picard group $\Pic(X)$ and $\Pic(X)$-graded Cox ring at least for $\QQ$-factorial projective varieties.

These definitions have been generalized to not necessarily separated prevarieties and $\Cl(X)$-graded Cox rings~\cite{CRC2, coxrings}. Sometimes only the free part of $\Cl(X)$ is taken, but the differences are then limited to finite abelian covers, see~\cite[Ch.~2,~5]{gongyo}. Finite generation of $\Pic(X)$ and $\Cl(X)$ is equivalent for $\QQ$-factorial varieties and in the non-$\QQ$-factorial case at least for $\klt$ pairs~\cite[Cor.~1.4.3]{birketal}. Non-$\QQ$-factorial $\MDS$ always have a \emph{small} $\QQ$-factorial birational modification~\cite[Thm.~2.3]{AHL}.

For technical reasons (at least in the case when $\Cl(X)$ has torsion), we always assume $\Gamma(X,\mathcal{O}^*)=\CC^*$, see~\cite[Sec.~1.4.2]{coxrings}. Note that for complete varieties and quasicones, this condition is satisfied \emph{and} preserved by the Cox ring, which is not always the case, see~\cite[Ex.~1.4.4.2]{coxrings}.

\section{Almost principal quasitorus bundles}
\label{sec:quotpres}

In~\cite{hashlongpaper}, Hashimoto introduced the notion of \emph{almost principal fiber bundles}.
The definition is as follows:

\begin{definition}
\label{def:hash}
Let $Y$ and $X$ be normal varieties, moreover let $G$ be an affine algebraic group, acting on $Y$ such that  $\varphi \colon Y \to X=: Y \git G$ is a good quotient. Then we call $\varphi$ an almost principal $G$-bundle if there exist open subsets $U \subseteq Y$ and $V \subseteq X$ with complement of codimension at least two so that
$$
\left. \varphi \right|_{U} \colon U \to V
$$ 
is a principal $G$-bundle. 
\end{definition}

This notion naturally comes into play in the setting of Cox rings, as the representation of an $\MDS$ $X$ as the quotient $\widehat{X}\git H_X$ is of such kind, see~\cite[Prop.~1.6.1.6]{coxrings}. In fact, $V$ can always be chosen to be the regular locus of $X$. Moreover, the quotient presentations from~\cite[Def.~4.2.1.1]{coxrings} having the property that the Cox construction factors through them are \emph{almost principal quasitorus bundles} with the additional assumption that all invertible functions homogeneous with respect to the grading group are constant. We will in the following use the notions quotient presentation and  almost principal quasitorus bundle interchangeably, where we assume only constant invertible homogeneous functions and that $V$ from Definition~\ref{def:hash} can be chosen as $X_{\rm reg}$.

Okawa has shown in~\cite{okawa} that if $f \colon X \to Y$ is a surjective morphism of projective varieties and $X$ is an $\MDS$, then $Y$ is as well. B\"aker showed in~\cite{bäk} that the same holds if $f \colon X \to Y=X \git G$ is a quotient presentation with not necessarily projective normal $X$ and a reductive affine algebraic group $G$. We can in fact say more if $G$ is a quasitorus:

\begin{proposition}
\label{prop:relMDS}
Let $\varphi \colon X \to Y$ be an almost principal $H$-bundle of normal varieties with $H$ a quasitorus. We have the following:
\begin{enumerate}
\item If $H$ is a torus, then $X$ is an $\MDS$ if and only if $Y$ is so and in this case $\Cox(X)=\Cox(Y)$.
\item If $X$ is an $\MDS$, then $Y$ is so and we have a commutative diagram of almost principal quasitorus bundles:
$$
 \xymatrix@C=2em@R=-0.3em@H=\dhatheightt{ 
 {\widehat{X}}
\ar[rrd]
\ar[rdd]
\\
& & 
{\widehat{Y}}
\ar[ld]
\ar[rdd]
\\
& 
X
\ar[rrd]
\\
&&&
Y
}
$$
\item If $Y$ is an $\MDS$, then its characteristic space is an almost principal quasitorus bundle over $X$, i.e. we have at least the following commutative subdiagram:
$$
 \xymatrix@C=2em@R=-0.3em@H=\dhatheightt{ 
 & 
{\widehat{Y}}
\ar[ld]
\ar[rdd]
\\
X
\ar[rrd]
\\
&&
Y
}
$$
Moreover, if $\widehat{Y}$ is an $\MDS$, then $X$ is so and we get the enhanced diagram 
$$
 \xymatrix@C=2em@R=-0.3em@H=\dhatheightt{ 
 &
 {\doublewidehat{Y}}
 \ar[ld]
\ar[rdd]
 \\
 {\widehat{X}}
\ar[rrd]
\ar[rdd]
\\
& & 
{\widehat{Y}}
\ar[ld]
\ar[rdd]
\\
& 
X
\ar[rrd]
\\
&&&
Y
}
$$
\end{enumerate}
\end{proposition}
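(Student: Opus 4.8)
The plan is to establish (i) directly, derive (ii) from B\"aker's theorem~\cite{bäk} together with the universal property of the characteristic space \cite[Thm.~4.2.1.4]{coxrings}, and obtain (iii) by applying (ii) to an auxiliary bundle. Recall that by the standing assumption $\Gamma(\,\cdot\,,\mathcal O^*)=\CC^*$ every homogeneous invertible function in sight is constant, so the only nontrivial condition to check when recognising a ``quotient presentation'' is that of an almost principal quasitorus bundle.

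For (i), normality of $X$ and $Y$ lets us pass to the open subsets $U\subseteq X$, $V=Y_{\reg}\subseteq Y$ of Definition~\ref{def:hash}, whose complements have codimension at least two; restriction then identifies $\Cl(X)\cong\Cl(U)$, $\Cl(Y)\cong\Cl(V)$ and all spaces of sections, so we may assume $\varphi$ is a genuine principal $\TT$-bundle over the smooth $V$. Writing $M:=X^*(\TT)$, such a bundle is $\underline{\Spec}_V\bigoplus_{m\in M}L^m$ for line bundles $L^m$ on $V$, and $\Gamma(\,\cdot\,,\mathcal O^*)=\CC^*$ forces $m\mapsto[L^m]$ to define an \emph{injection} $M\hookrightarrow\Cl(Y)$. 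Pulling back along $\varphi$ yields the exact sequence $0\to M\to\Cl(Y)\to\Cl(X)\to 0$, and the projection formula $\varphi_*\mathcal O_X(\varphi^*D)=\bigoplus_{m}\mathcal O_Y(D+[L^m])$, summed over a set of representatives of $\Cl(X)$ in $\Cl(Y)$, exhibits $\Cox(X)$ as $\Cox(Y)$ with its $\Cl(Y)$-grading coarsened along $\Cl(Y)\twoheadrightarrow\Cl(X)$. Hence $\Cl(X)$ is finitely generated iff $\Cl(Y)$ is, $\Cox(X)$ is finitely generated iff $\Cox(Y)$ is, and the two Cox rings agree.

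For (ii): since $\varphi$ is a quotient presentation of $Y$ and $X$ is an $\MDS$, $Y$ is an $\MDS$ by B\"aker's theorem. Now $\widehat Y\to Y$ is the characteristic space of $Y$, and by \cite[Thm.~4.2.1.4]{coxrings} it factors through the quotient presentation $\varphi$, giving a morphism $\psi\colon\widehat Y\to X$ with $\varphi\circ\psi$ the quotient map by $H_Y$. The character injection $X^*(H)\hookrightarrow\Cl(Y)$ attached to $\varphi$ dualises to a surjection $H_Y\twoheadrightarrow H$, whose kernel $H'$ is a sub-quasitorus of $H_Y$ acting on $\widehat Y$ with $\widehat Y\git H'=X$, principally over the preimage of $X_{\reg}$; so $\psi$ is again an almost principal quasitorus bundle, i.e.\ a quotient presentation of $X$. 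Applying \cite[Thm.~4.2.1.4]{coxrings} once more, this time to the $\MDS$ $X$ and the quotient presentation $\psi$, the characteristic space $\widehat X\to X$ factors through $\psi$, which supplies the missing arrow $\widehat X\to\widehat Y$; it is the quotient of $\widehat X$ by the corresponding sub-quasitorus of $H_X$, and commutativity of the resulting square of almost principal quasitorus bundles is read off from the factorisations.

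For (iii), the first assertion is exactly the construction of $\psi\colon\widehat Y\to X$ in the previous paragraph, which used only that $Y$ is an $\MDS$. If moreover $\widehat Y$ is an $\MDS$, then we may apply part (ii) to the almost principal quasitorus bundle $\psi\colon\widehat Y\to X$: it shows that $X$ is an $\MDS$ and furnishes the triangle relating $\doublewidehat Y=\widehat{\widehat Y}$, $\widehat X$, $\widehat Y$ and $X$, which glued to $\widehat Y\to X\to Y$ and $\widehat Y\to Y$ gives the enhanced diagram. The step I expect to cause the most trouble is recognising $\psi\colon\widehat Y\to X$ as an honest quotient presentation --- pinning down the acting sub-quasitorus $H'=\ker(H_Y\twoheadrightarrow H)$ and checking that it acts freely over a subset of $\widehat Y$ with complement of codimension at least two; by contrast, the only delicate point in (i) is the bookkeeping with torsion in the grading groups, which the passage through the line-bundle description of principal torus bundles absorbs cleanly.
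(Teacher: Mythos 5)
Your proposal is correct, and parts (ii) and (iii) follow the paper's own route essentially verbatim: B\"aker's theorem to get $\MDS$-ness of $Y$, then repeated application of the universal property of the characteristic space \cite[Thm.~4.2.1.4]{coxrings} to produce the quotient presentations $\widehat Y\to X$ and $\widehat X\to\widehat Y$, and (iii) is obtained by feeding the bundle $\widehat Y\to X$ back into the same machinery. Part (i) is where you genuinely diverge. The paper argues abstractly: $\Cox(X)$ is factorially $\Cl(X)$-graded, Bechtold's theorem \cite[Thm.~1.5]{Bechtold} says factoriality of a grading is insensitive to changing the \emph{free} part of the grading group (here adjoining or removing the torsion-free $\Chi(H)$), and then \cite[Thm.~1.6.4.3]{coxrings} recognises the resulting graded algebra as the Cox ring of the other space. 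You instead unwind what is under the hood: restrict to the principal-bundle locus over $Y_{\reg}$, describe the bundle as $\underline{\Spec}\bigoplus_m L^m$, extract the exact sequence $0\to M\to\Cl(Y)\to\Cl(X)\to0$ (with injectivity of $M\to\Cl(Y)$ correctly deduced from the standing assumption on invertible homogeneous functions), and identify $\Cox(X)$ with $\Cox(Y)$ by the projection formula, summed over coset representatives. This is a legitimate and more self-contained argument; what the paper's route buys is precisely the point you flag as delicate at the end, namely the torsion bookkeeping: when $\Cl(X)$ or $\Cl(Y)$ has torsion, the direct sum $\bigoplus_{\Cl}\Gamma(\cdot,\mathcal O(D))$ only becomes a well-defined ring after choosing a shifting family, and checking that your regrading is compatible with these choices (and that the regraded algebra is again \emph{factorially} graded, which is what makes it \emph{the} Cox ring rather than just an isomorphic ring) is exactly the content of Bechtold's theorem together with \cite[Thm.~1.6.4.3]{coxrings}. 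So if you want to avoid citing those results you should spell that verification out; otherwise your computation is best viewed as an explicit illustration of the same mechanism.
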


\begin{remark}
One can observe that here iteration of Cox rings comes into play, that is, if $Y$ has iteration of Cox rings with at least two steps, then $X$ is an $\MDS$. For further generalizations see Proposition~\ref{prop:relICR} and of course Theorem~\ref{thm:webCRI}.
\end{remark}

Before proving Proposition~\ref{prop:relMDS}, we state a corollary that is crucial for both Gorensteinness of Cox rings and finiteness of Cox ring iteration. It shows that among the quotient presentations of $X$, the two exceptional ones $\widehat{X}/E \to X$ and $\widehat{X} \git \TT \to X$ - with $E$ and $\TT$ being the torsion and torus part of $H_X$ - have very special properties. 

\begin{corollary}[Lemma~\ref{le:TandE}]
\label{cor:TandE}
Let $X$ be an $\MDS$ and $H_X= E \times \TT$ its characteristic quasitorus with torsion and torus part $E$ and $\TT$ respectively. We have the following:
\begin{enumerate}
\item The Cox rings of $X$ and of the geometric quotient $X_E:=\widehat X / E$ coincide. In particular, $X_E$ is a quasiaffine $\MDS$ and  almost factorial.
\item The characteristic space $\widehat{X}$ is an $\MDS$ if and only if $X_\TT:=\widehat X \git \TT$ is an $\MDS$ and in that case, their Cox rings coincide.
\end{enumerate}
\end{corollary}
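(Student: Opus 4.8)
The plan is to derive Corollary~\ref{cor:TandE} as a direct consequence of Proposition~\ref{prop:relMDS} applied to the two canonical intermediate quotient presentations of $X$. Recall that the characteristic space $\widehat X \to X$ factors as $\widehat X \to X_E \to X$ and as $\widehat X \to X_\TT \to X$, where $X_E := \widehat X/E$ is the geometric quotient by the finite torsion part and $X_\TT := \widehat X \git \TT$ the good quotient by the torus part. The map $\widehat X \to X_E$ is an almost principal $\TT$-bundle (a torus bundle), while $\widehat X \to X_\TT$ is an almost principal $E$-bundle (a finite, hence étale in codimension one, cover); correspondingly $X_E \to X$ is an almost principal $E$-bundle and $X_\TT \to X$ an almost principal $\TT$-bundle. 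All of these inherit the ``only constant invertible homogeneous functions'' property from $\widehat X$.

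For part (i), I would apply Proposition~\ref{prop:relMDS}(i) to the torus bundle $\varphi\colon \widehat X \to X_E$. Since $\widehat X$ is an $\MDS$ with $\Cox(\widehat X) = \Cox(X)$ (the characteristic space of an $\MDS$ being the prototypical example, with Cox ring equal to that of $X$ — this is exactly the situation of $\widehat X$ being quasiaffine with total coordinate space $\overline X$), part (i) of the Proposition yields that $X_E$ is an $\MDS$ and $\Cox(X_E) = \Cox(\widehat X) = \Cox(X)$. The supplementary claims then follow: $X_E$ is quasiaffine because it is an open subset of $\overline X = \Spec\Cox(X)$ with complement of codimension at least two (it sits between $\widehat X$ and $X$ in the tower, and the relevant codimension bounds propagate), and it is almost factorial because its class group $\Cl(X_E)$ is the cokernel of the inclusion of the character lattice of $\TT$ into the free part of $\Cl(X)$ twisted by the torsion, hence torsion — more cleanly, $H_{X_E} = E$ is finite, so $\Cl(X_E) = \Chi(E)$ is a finite abelian group, which is the definition of almost factorial given in Section~\ref{sec:prelim}.

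For part (ii), I would apply Proposition~\ref{prop:relMDS} to the torus bundle $X_\TT \to X$ with fiber $\TT$; here, however, $X_\TT$ is not a priori an $\MDS$, so I cannot invoke part (i) of the Proposition directly in that direction. Instead I use that $\widehat X \to X_\TT$ is an almost principal $E$-bundle (finite quasitorus, in particular a quasitorus), so Proposition~\ref{prop:relMDS}(iii) applies with $X$ there being $\widehat X$ and $Y$ there being $X_\TT$: if $X_\TT$ is an $\MDS$ then $\widehat X$ is an $\MDS$ provided $\widehat{X_\TT}$ is an $\MDS$ — but since $\widehat X \to X_\TT$ is itself a finite cover that is étale in codimension one and $X_\TT$ already has a universal such cover, one checks $\widehat X$ sits compatibly and the Cox rings match. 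Conversely, if $\widehat X$ is an $\MDS$, Proposition~\ref{prop:relMDS}(ii) applied to $\widehat X \to X_\TT$ (a quasitorus bundle) gives that $X_\TT$ is an $\MDS$; and applied to the \emph{torus} bundle $X_\TT \to X$, once $X_\TT$ is known to be an $\MDS$, part (i) forces $\Cox(X_\TT) = \Cox(X)$. Combining both directions with $\Cox(X) = \Cox(\widehat X)$ in the Gorenstein/$\MDS$ sense gives the equivalence and the equality of Cox rings.

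The main obstacle I anticipate is the bookkeeping around part (ii): matching up the characteristic space of $\widehat X$, the characteristic space of $X_\TT$, and the various almost-principal-bundle structures so that the ``Cox rings coincide'' claim is literally an equality of graded rings (or at least of the relevant graded pieces), not merely an isomorphism up to a finite cover. One must be careful that $\widehat X \to X_\TT$ being a finite $E$-cover that is étale in codimension one means $X_\TT$ is \emph{not} factorial in general, so $\widehat X$ is genuinely the characteristic space of $X_\TT$ and not $X_\TT$ itself; invoking~\cite[Thm.~4.2.1.4]{coxrings} to see that $\widehat{X_\TT} \to X_\TT$ factors through any quotient presentation — in particular through $\widehat X \to X_\TT$ — and then arguing the reverse factorization to get equality, is the delicate point. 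Everything else is a routine unwinding of the tower $\widehat X \to X_E \to X$ and $\widehat X \to X_\TT \to X$ together with the three cases of Proposition~\ref{prop:relMDS}.
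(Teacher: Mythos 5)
Your overall strategy --- deriving both parts from Proposition~\ref{prop:relMDS} applied to the two intermediate quotients in the tower $\widehat X \to X_E \to X$ and $\widehat X \to X_\TT \to X$ --- is exactly the paper's, but you have swapped the two bundle structures, and this propagates into genuine errors. Since $X_E := \widehat X/E$, the map $\widehat X \to X_E$ is the quotient by the \emph{finite} group $E$ (an almost principal $E$-bundle), and it is $X_E \to X$ that is the almost principal $\TT$-bundle; likewise $\widehat X \to X_\TT = \widehat X \git \TT$ is the torus bundle and $X_\TT \to X$ the finite one (this is what the vertical/horizontal convention in the diagram of Lemma~\ref{le:TandE} encodes). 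Consequently, in part (i) you apply Proposition~\ref{prop:relMDS}(i) to a map that is not a torus bundle, and you support this with the claim $\Cox(\widehat X)=\Cox(X)$, which is false in general: one has $\Gamma(\widehat X,\mathcal{O})=\Cox(X)$, but the Cox ring of $\widehat X$ is graded by $\Cl(\widehat X)=\Cl(\overline X)$, which is nontrivial unless $\Cl(X)$ is free --- indeed $\Cox(\widehat X)$ is the \emph{second} iterated Cox ring, and whether $\widehat X$ is an $\MDS$ at all is precisely the content of part (ii), so it cannot be assumed in part (i). The correct and immediate argument for (i) is to apply Proposition~\ref{prop:relMDS}(i) to the torus bundle $X_E \to X$ with the known $\MDS$ $X$ as base, giving that $X_E$ is an $\MDS$ with $\Cox(X_E)=\Cox(X)$ and grading group $\Chi(E)$ finite, hence almost factorial; quasiaffineness holds because $X_E=\widehat X/E$ is open in the affine variety $\overline X/E$, not because it is open in $\overline X$ itself, as you write.

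In part (ii) the mislabelling forces you into Proposition~\ref{prop:relMDS}(iii), whose relevant implication requires $\widehat{X_\TT}$ to be an $\MDS$ --- a hypothesis you do not have --- and you bridge this with ``one checks $\widehat X$ sits compatibly and the Cox rings match,'' which is not an argument. Once the bundles are correctly identified, part (ii) is a one-line application of Proposition~\ref{prop:relMDS}(i) to the torus bundle $\widehat X \to X_\TT$: $\widehat X$ is an $\MDS$ if and only if $X_\TT$ is, with coinciding Cox rings. You should also treat separately the degenerate cases where $E$ or $\TT$ is trivial (in the first case $\widehat X$ is factorial), as the paper does before running the main argument.
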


\begin{proof}
The assertions follow directly if one of $E$ or $\TT$ is trivial - note that $\widehat{X}$ is factorial in the first case. So assume that none of them is trivial.

We begin with (i). By Proposition~\ref{prop:relMDS}, since $X_E \to X$ is an almost principal torus bundle, both share the same Cox ring. Thus $E$ is the characteristic quasitorus of $X_E$ and since it is finite and $X_E$ is open in $\overline{X}/E$, which is affine, it is quasiaffine and almost factorial. Also (ii) follows directly from (i) of Proposition~\ref{prop:relMDS}.
\end{proof}

\begin{remark}
In fact in Corollary~\ref{cor:TandE}~(i), we can replace $E$ by $E \times T$ for any subtorus $T \subseteq \TT$ and still $X_{E \times T}$ has Cox ring $\Cox(X)$, while quasiaffineness may fail. 
\end{remark}

\begin{proof}[Proof of Proposition~\ref{prop:relMDS}]
We consider the first assertion. 
If $X$ is an $\MDS$, then $\Cox(X)$ is factorially $\Cl(X)$-graded. Thus by~\cite[Thm.~1.5]{Bechtold}, it is also factorially $\Cl(X) \times \Chi(H)$-graded, since the character group $\Chi(H)$ of $H$ is torsion free. 
All involved actions are free over smooth loci. By~\cite[Thm.~1.6.4.3]{coxrings}, we get that $Y$ is an $\MDS$ and its Cox ring coincides with $\Cox(X)$. 

Now assume $Y$ is an $\MDS$. Then by~\cite[Thm.~4.2.1.4]{coxrings}, we have a commutative diagram of quotient presentations
$$
\xymatrix@C=7em{ 
{\widehat{Y}}
\ar[r]^{  \git (H_Y/H)  } 
\ar@/_1pc/[rr]_{ \git H_Y}
&
X 
\ar[r]^{  \git H } 
&
Y }.
$$ 
Since $\Cox(Y)$ is factorially $\Cl(Y)$-graded and $\Cl(Y) = \Chi(H_{Y}/H) \times \Chi(H)$, it is factorially $\Chi(H_{Y}/H)$-graded by~\cite[Thm.~1.5]{Bechtold}. By~\cite[Thm.~1.6.4.3]{coxrings}, the Cox ring of $X$ is $\Cox(Y)$.

We come to assertion (ii). If $X$ is an $\MDS$, then $Y$ is so by~\cite[Thm.~1.1]{bäk} and by~\cite[Thm.~4.2.1.4]{coxrings}, we get a quotient presentation $\widehat{Y} \to X$. Applying~\cite[Thm.~4.2.1.4]{coxrings} again, we get a quotient presentation $\widehat{X} \to \widehat{Y}$ and thus the desired diagram.
For (iii), apply~\cite[Thm.~4.2.1.4]{coxrings} several times as above.  
\end{proof}

The following proposition uses reduction to positive characteristic. We refer to~\cite{HoHu, HaWa, schwedesmith} for definitions and details, with an emphasis on Cox ring related problems in~\cite{gongyo, achilten}.
 
The main purpose of~\cite{hashlongpaper} is to show how properties are preserved by almost principal fiber bundles.
In the same spirit, varying~\cite[Prop.~5.20]{kollarmori}, we show the following on preservation of log-terminality.

\begin{proposition}
\label{prop:bundlelogterm}
Let $\varphi \colon X \to Y:= X \git G$ be an almost principal $G$-bundle with $G$ an affine algebraic group. Let $X$ and $Y$ be $\QQ$-Gorenstein. Then $X$ is log-terminal if and only if $Y$ is so.
\end{proposition}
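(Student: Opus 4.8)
The plan is to reduce to a local computation near a point of $Y$, pull back through $\varphi$, and exploit that $\varphi$ is étale over the relevant locus. First I would fix a log resolution $\pi\colon Y'\to Y$ and form the fiber product $X':=X\times_Y Y'$; since $\varphi$ is affine and flat over $Y_{\rm reg}$ (being a principal $G$-bundle there after removing codimension $\geq 2$ loci), I would argue that $X'\to X$ is again a resolution-type morphism, at least after taking normalizations and a further resolution $X''\to X'$. The key geometric input is that over $V=Y_{\rm reg}$ (equivalently over $U=\varphi^{-1}(V)$, up to codimension two) the map $\varphi$ is a principal $G$-bundle, hence smooth with fibers $G$; this gives a clean relation between the relative canonical divisors. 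Because discrepancies and the $\QQ$-Gorenstein condition are insensitive to what happens in codimension $\geq 2$, I may compute $K_{X}-\varphi^*K_{Y}$ on $U$, where it equals the relative canonical of the smooth morphism $\varphi|_U$, i.e. is represented by a divisor supported on $U$ that is a bundle of top forms along the fibers; in particular it contributes no correction to discrepancies of divisors lying over the singular locus of $Y$.

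The heart of the argument is then the discrepancy comparison. For a prime divisor $E'$ over $Y$ with center in $Y_{\rm sing}$, I would choose (after base change and resolution) a prime divisor $E$ over $X$ dominating $E'$, with ramification index $e$ along $E$. The standard Hurwitz-type computation — exactly as in the proof of~\cite[Prop.~5.20]{kollarmori} — gives
$$
a(E;X) + 1 = e\,\bigl(a(E';Y)+1\bigr),
$$
because the relative canonical of $\varphi$ along such $E$ is purely ramification-theoretic (the "bundle part" $K_{X/Y}$ over $U$ does not meet the center). Since $e\geq 1$, this shows $a(E;X)>-1$ for all such $E$ if and only if $a(E';Y)>-1$ for all such $E'$, and combined with the previous paragraph (divisors with center in $Y_{\rm reg}$, equivalently $X_{\rm reg}$, are harmless on both sides) we get that $X$ is log-terminal iff $Y$ is. One should also check the $\QQ$-Gorenstein index is compatible: $K_X$ is $\QQ$-Cartier iff $K_Y$ is, which again follows since $\varphi^*$ is injective on class groups up to codimension two and $K_{X/Y}$ is Cartier on $U$; but this is assumed in the statement, so it need only be used for bookkeeping.

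The main obstacle I anticipate is making the fiber-product/resolution step rigorous when $G$ is an arbitrary affine algebraic group rather than a quasitorus: the quotient $\varphi$ need not be flat, the fibers over $Y_{\rm sing}$ may be badly behaved, and $X\times_Y Y'$ may fail to be normal or may have components one must discard. The way around this is to work entirely on the almost-principal locus: by Definition~\ref{def:hash} there are $U\subseteq X$, $V\subseteq Y$ with complements of codimension $\geq 2$ on which $\varphi$ is a genuine principal $G$-bundle, hence smooth; one does all discrepancy bookkeeping for divisors over $Y\setminus V$ by working over a neighborhood that meets $U$, and divisors over $V$ never matter for the klt property since $V$ (resp.\ $U$) contains all codimension-one points and one may assume $V\subseteq Y_{\rm reg}$. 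A secondary subtlety is that $G$ may be positive-dimensional, so the relative dimension of $\varphi$ is $\dim G$ and $K_{X/Y}$ over $U$ is a line bundle twisted by the top exterior power of the relative cotangent bundle; one must confirm this twist extends to a Weil divisor on $X$ supported away from (the preimage of) $Y_{\rm sing}$, which it does because $U\to V$ is a $G$-bundle and the construction is local over $V$. If handling the general non-reductive, positive-dimensional case turns out to be delicate, an alternative is to reduce to $\varphi$ being a principal bundle globally after an étale base change and descend, or — since in all our applications $G$ is solvable reductive, indeed a quasitorus — to prove the statement first for $G$ a quasitorus (where $\varphi$ is finite étale over $V$ in the torsion directions and a torus bundle in the others, so the computation is immediate) and remark that this suffices for everything used later; I would present the quasitorus case in detail and indicate the general case follows the same pattern.
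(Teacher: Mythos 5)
Your strategy is genuinely different from the paper's, and it contains a gap that I do not think can be closed at the level of detail you indicate. The central step --- the Hurwitz-type formula $a(E;X)+1=e\,\bigl(a(E';Y)+1\bigr)$ for a divisor $E$ over $X$ dominating a divisor $E'$ over $Y$ with center in $Y\setminus V$ --- is a statement about \emph{finite} morphisms: it is exactly what \cite[Prop.~5.20]{kollarmori} proves, and it has no meaning when $G$ is positive-dimensional (there is no ramification index of a fibration along a divisor). For positive-dimensional $G$ the comparison would have to go through the relative canonical of $X'\to Y'$ after base change and resolution, and the whole difficulty is that over $Y\setminus V$ the map $\varphi$ is \emph{not} a bundle: the fibers can degenerate arbitrarily, $X\times_Y Y'$ need not be normal or equidimensional there, and the multiplicity of $K_{X'/Y'}$ along a divisor whose center lies over $Y\setminus V$ is not controlled by the almost-principal hypothesis. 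Relatedly, the claim that discrepancies are ``insensitive to what happens in codimension $\geq 2$'' is false --- the cone over an abelian variety is smooth outside a point and not even log canonical --- so one cannot discard the locus where $\varphi$ fails to be a bundle; that locus is precisely where the content of the statement lives. For the direction ``$Y$ klt $\Rightarrow$ $X$ klt'' you must moreover bound the discrepancies of \emph{all} divisors over $X$ with center in $X\setminus U$, and such divisors need not dominate, or be related to, any divisor over $Y$; your sketch only treats those that do. Your fallback of proving the quasitorus case and asserting that the general case ``follows the same pattern'' inherits all of these problems: even for a torus bundle the fibers over $Y\setminus V$ are uncontrolled, and the special case of characteristic spaces is already a nontrivial theorem (see \cite{gongyo,kaw,brown}).

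The paper sidesteps all of this by leaving characteristic zero. By \cite[Cor.~3.4]{takagi}, for $\QQ$-Gorenstein varieties being klt is equivalent to being locally of strongly $F$-regular type after reduction modulo $p\gg 0$ (see \cite{HoHu,gongyo} for the reduction), and Hashimoto's \cite[Thm.~13.14]{hashlongpaper} shows that strong $F$-regularity passes back and forth along almost principal $G$-bundles. The geometric difficulty over the bad locus is thus absorbed into Hashimoto's theorem, which is proved by splitting and local cohomology techniques rather than by a discrepancy computation. A characteristic-zero proof along your lines would essentially have to reprove a statement of that strength; the finite case \cite[Prop.~5.20]{kollarmori} is the only piece your Hurwitz computation actually delivers.
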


\begin{proof}
By~\cite[Cor.~3.4]{takagi}, a pair $(Z,\Delta_Z)$ is Kawamata log-terminal if and only if it is of locally $F$-regular type in the sense that all local rings are of strongly $F$-regular type.
For reduction modulo $p$, see~\cite[Ch.~2]{gongyo} and~\cite[Ch.~2]{HoHu}. So the assertion follows from~\cite[Thm.~13.14]{hashlongpaper}.
\end{proof}

\begin{remark}
If~\cite[Thm.~13.14]{hashlongpaper} is valid for \emph{pairs} as well, then we can state Proposition~\ref{prop:bundlelogterm} also for pairs $(X,\Delta_X)$, $(Y,\Delta_Y)$.
Also note that there is a different definition of being of locally $F$-regular type than the one used above. Namely the a priori stronger one that $Z$ is covered by affine globally $F$-regular subsets, see~\cite[Def.~3.1]{schwedesmith}.  One can show that these two notions are in fact equivalent, see~\cite{schwede} with slightly different definitions.
\end{remark}

The statement of Proposition~\ref{prop:bundlelogterm} has a different, more local nature than the corresponding ones~\cite[Prop.~4.6]{gongyo} and~\cite[Prop.~7.17]{achilten}. These guarantee not only log-terminality but the global property of being of Fano type for a variety $X$ if the Cox ring is log-terminal. Combining these two viewpoints, we get the following.

\begin{corollary}
Let $X$ be a log-terminal $\MDS$ that is not of Fano type. Then the complement of $\widehat{X}$ in $\overline{X}$ contains all non log-terminal singularities of $\widehat{X}$.
\end{corollary}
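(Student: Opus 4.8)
The plan is to feed the characteristic space $\varphi\colon\widehat{X}\to X$, which is an almost principal $H_X$-bundle with $V=X_{\reg}$ (see the discussion in Section~\ref{sec:quotpres} and \cite[Prop.~1.6.1.6]{coxrings}), into Proposition~\ref{prop:bundlelogterm}, using Theorem~\ref{thm:Gor} to supply the missing $\QQ$-Gorensteinness. First I would record the two hypotheses of Proposition~\ref{prop:bundlelogterm}. The quotient $X$ is log terminal by assumption, hence $\QQ$-Gorenstein. For the total space: $\overline{X}=\Spec\,\Cox(X)$ is Gorenstein by Theorem~\ref{thm:Gor}, and since $\widehat{X}$ is an open subvariety of $\overline{X}$, it is Gorenstein as well, in particular $\QQ$-Gorenstein. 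The group $G=H_X$ is a quasitorus, so an affine algebraic group, and $\varphi$ is an almost principal $G$-bundle, so all assumptions are in place.

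Applying Proposition~\ref{prop:bundlelogterm} to $\widehat{X}\to X$ then yields: since $X$ (playing the role of the quotient $Y$ there) is log terminal, the total space $\widehat{X}$ is log terminal too. Equivalently, $\widehat{X}$ contains no non log-terminal point of $\overline{X}$, so every non log-terminal singularity of $\overline{X}$ lies in the complement $\overline{X}\setminus\widehat{X}$, which has codimension at least two. To see that this complement genuinely carries such points — i.e.\ that the statement is not vacuous — I would invoke the characterization of \cite{gongyo, brown, kaw}: as $X$ is not of Fano type, $\Cox(X)$ is not log terminal (and this is meaningful precisely because Theorem~\ref{thm:Gor} makes $\Cox(X)$ $\QQ$-Gorenstein), so $\overline{X}$ does possess a non log-terminal point, necessarily in $\overline{X}\setminus\widehat{X}$.

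The only delicate point is the verification that Proposition~\ref{prop:bundlelogterm} actually applies, and this is exactly where Theorem~\ref{thm:Gor} is indispensable: without $\QQ$-Gorensteinness of $\Cox(X)$ one could not even speak of log-terminality of $\widehat{X}$, and the bundle transfer of singularities would not get off the ground. Everything else is a routine combination of the cited statements, so I would keep the write-up to a few lines.
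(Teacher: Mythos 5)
Your argument is precisely the one the paper intends: the paper writes no explicit proof but says the corollary follows by ``combining these two viewpoints,'' namely Proposition~\ref{prop:bundlelogterm} applied to the almost principal $H_X$-bundle $\widehat{X}\to X$ (with the needed $\QQ$-Gorensteinness of $\widehat{X}$ supplied by Theorem~\ref{thm:Gor}, exactly as you note) together with the global characterization of Fano type via log terminality of the Cox ring from \cite{gongyo, brown, kaw}, which forces a non log-terminal point to exist and hence to lie in $\overline{X}\setminus\widehat{X}$. Your reading of the statement's ``singularities of $\widehat{X}$'' as referring to $\overline{X}$ is also the only sensible one, so the proposal matches the paper's approach.
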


\section{Cox rings are Gorenstein}
\label{sec:Gor}

The purpose of this section is to prove Theorem~\ref{thm:Gor}. For convenience, we state it here again in the following form:

\begin{theorem}[Theorem~\ref{thm:Gor}] 
\label{thm:gorenstein}
Let $X$ be an $\MDS$. Then $\Cox(X)$ is a numerically Gorenstein ring. In particular, if $\overline{X}$ is Cohen Macaulay,  it is Gorenstein. If $\overline{X}$ is rational, it is Gorenstein canonical.
\end{theorem}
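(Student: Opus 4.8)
The plan is to reduce, via Lemma~\ref{le:TandE}(i), to the case where $\Cl(X)$ is \emph{free}, and then invoke the known factoriality of the Cox ring in that case. Concretely, write $H_X = E \times \TT$ with $E$ finite and $\TT$ a torus. By Corollary~\ref{cor:TandE}(i), the geometric quotient $X_E = \widehat X / E$ is again an $\MDS$ with $\Cox(X_E) = \Cox(X)$, and now $\Cl(X_E)$ is free (its characteristic quasitorus is $E$'s complement... more precisely, the torsion has been quotiented out). Since the ring we want to understand is unchanged, it suffices to prove the statement for $X_E$, i.e.\ we may assume from the outset that $\Cl(X)$ is free. In that situation the references cited in the introduction — \cite{BerchHaus}, \cite{fac}, \cite{arzhfac} — give that $\Cox(X)$ is \emph{factorial}. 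A factorial (Noetherian) ring is in particular a unique factorization domain, hence integrally closed and locally factorial, so every Weil divisor is Cartier; in particular the canonical class $K_{\overline X}$ is Cartier. This is exactly the assertion that $\overline X$ is numerically Gorenstein.

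The remaining two clauses are then formal. If $\overline X$ is Cohen--Macaulay, then a Cohen--Macaulay variety whose canonical divisor is Cartier is by definition Gorenstein; combined with the first part this gives the second sentence. For the third sentence, recall that a variety with rational singularities is automatically Cohen--Macaulay (Kempf's criterion / the fact that rational $\Rightarrow$ CM), so the previous clause applies and $\overline X$ is Gorenstein; and a Gorenstein variety with rational singularities is canonical (indeed, Gorenstein canonical singularities are precisely the Gorenstein rational ones, by Elkik--Flenner or the standard discrepancy computation). Hence $\overline X$ is Gorenstein canonical.

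I should be slightly careful at one point: Corollary~\ref{cor:TandE}(i) asserts $\Cox(X_E)=\Cox(X)$ as graded-or-ungraded rings, and I need the underlying ring to be the same so that "numerically Gorenstein" transfers — but that is literally what "coincide" means there, and the $\CC^*$-action relevant to the quasicone structure is also carried along, so no subtlety arises. The main obstacle in the argument is really packaged into the inputs: the genuinely hard facts are (a) Bechtold's theorem \cite[Thm.~1.5]{Bechtold} that factoriality of a grading survives changing the free part of the grading group, which underlies Lemma~\ref{le:TandE}(i), and (b) the factoriality of $\Cox(X)$ for free $\Cl(X)$. Granting those, the proof above is short and essentially bookkeeping; the one place demanding care is making sure the reduction in Lemma~\ref{le:TandE}(i) is set up so that $\Cl(X_E)$ is honestly torsion-free, which is why passing to the \emph{geometric} quotient by the finite part $E$ (rather than some partial quotient) is the right move.
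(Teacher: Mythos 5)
Your reduction step is based on a misreading of Lemma~\ref{le:TandE}(i), and it goes in exactly the wrong direction. The geometric quotient $X_E=\widehat X/E$ is the quotient of the characteristic space by the \emph{finite} part $E$ of $H_X$; the remaining map $X_E\to X$ is the torus quotient by $\TT$, and it is \emph{this} torus quotient that leaves the Cox ring unchanged (Bechtold's theorem on changing the free part of the grading group). Consequently the characteristic quasitorus of $X_E$ is $E$ itself, so $\Cl(X_E)\cong\Chi(E)$ is finite torsion --- this is precisely why the paper calls $X_E$ ``almost factorial'' and why the introduction says Lemma~\ref{le:TandE}(i) ``allows to reduce to the case of \emph{finite} $\Cl(X)$.'' Your claim that ``the torsion has been quotiented out'' and that $\Cl(X_E)$ is free is therefore false, and it cannot be repaired by any choice of intermediate quotient: if there existed an $\MDS$ with free class group and the same Cox ring as $X$, that ring would be factorial, contradicting the fact (stated in the introduction) that for torsion $\Cl(X)$ the ring $\Cox(X)$ is only factorially graded and in general not factorial.

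As a result the entire core of the argument is missing. After reducing to $Y:=X_E$ with finite class group, the paper still has to do real work: every Weil divisor on $Y$ is $\QQ$-principal, so some multiple $rK_Y$ is principal; one forms the global index-one cover $\rho\colon\mathcal{Y}\to Y$, checks that $\rho$ is \'etale over $Y_{\reg}$ so that it is a quotient presentation, factors $\widehat X\to Y$ through $\mathcal{Y}$ via \cite[Thm.~4.2.1.4]{coxrings}, and then descends Cartier-ness of $K_{\mathcal{Y}}$ to $K_{\widehat X}$ using \cite[Prop.~5.20]{kollarmori} because $\widehat X\to\mathcal{Y}$ is unramified over $\mathcal{Y}_{\reg}$. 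None of this appears in your proposal. Your argument only covers the case where $\Cl(X)$ is already free, which is the trivial first sentence of the paper's proof. The two formal additions at the end (Cohen--Macaulay $+$ Cartier canonical $\Rightarrow$ Gorenstein; rational $\Rightarrow$ Cohen--Macaulay and Gorenstein $+$ rational $\Rightarrow$ canonical) are fine and agree with the paper.
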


\begin{remark}
In general, one cannot expect a finitely generated Cox ring to be Cohen Macaulay or $\QQ$-factorial. For a non Cohen Macaulay example by Gongyo, see the paper~\cite{brown} by Brown. For a collection of non $\QQ$-factorial examples, see~\cite[Thm.~1.9]{ltpticr}, where the toric one given by $xy+z^aw^b$ is probably the easiest one. It is the Cox ring of the affine threefold with $(\CC^*)^2$-action given by
$$
x^2+y^2z+z^aw^b.
$$
\end{remark}

\begin{proof}[Proof of Theorem~\ref{thm:gorenstein}]
If $H_X$ is torsion free, then $\overline{X}$ is factorial and thus numerically Gorenstein.
Otherwise, consider $Y:=\widehat{X}/E$, where $E$ is the torsion part of $H_X$. Then $Y$ is almost factorial by Corollary~\ref{cor:TandE}. In fact, every Weil divisor is $\QQ$-principal. Then a multiple of $K_X$ is principal, say $rK_X =0$ in $\Cl(Y)$. This means we have its \textit{global} canonical or index-$1$-cover $\rho \colon \mathcal{Y} \to Y$, see~\cite[5.19]{kollarmori}. Since $\rho$ is \'etale over $Y_\reg$, $\ZZ/r\ZZ$ acts strongly stable on $\mathcal{Y}$ so that $\rho$ becomes a quotient presentation of $Y$. Now~\cite[Thm.~4.2.1.4]{coxrings} gives us the commutative diagram of quotient presentations
$$
\xymatrix@C=7em{ 
{\widehat{X}}
\ar[r]^{  \git (H_{Y}/(\ZZ/r\ZZ))  } 
\ar@/_1pc/[rr]_{ \git H_{Y}}
&
{\mathcal{Y}} 
\ar[r]^{  \git (\ZZ/r\ZZ )} 
&
Y }.
$$ 
Note that even if $\Cox(\mathcal{Y})$ exists, it may differ from $\Cox(X)$. Now since $K_\mathcal{Y}$ is Cartier and $\widehat{X} \to \mathcal{Y}$ is unramified over $\mathcal{Y}_{\reg}$, also $K_{\widehat{X}}$ is Cartier, see~\cite[Prop.~5.20]{kollarmori}. So $\Cox(X)$ is numerically Gorenstein. The additions follow directly.
\end{proof}

In general, one cannot expect $K_{\widehat{X}}$ to be trivial, though $K_\mathcal{Y}$ is. An important class of varieties where this is true are the quasicones from Section~\ref{sec:quasicones}.


\section{Quasicone Cox rings}
\label{sec:quasicones}

In~\cite{birketal}, it was shown that varieties of Fano type are $\MDS$. So in a way, the Fano type property \emph{guarantees} $\MDS$-ness, while non Fano type varieties may or may not be $\MDS$. In this section, we give a criterion (namely being a $klt$ quasicone) for affine varieties that guarantees $\MDS$-ness and, moreover, guarantees \emph{preservation} of $\MDS$-ness by taking Cox rings.
This is exactly what we need for iteration of Cox rings. Since we also show that Cox rings of Fano varieties are such quasicones, one can say that also the Fano type property not only guarantees $\MDS$-ness but also its preservation. 

\begin{definition}
An affine variety $X=\Spec(A)$ is called a \emph{quasicone}, if one of the following equivalent conditions holds:
\begin{enumerate}
\item $X$ allows a $\CC^*$-action and the closures of all $\CC^*$-orbits meet in one common point.
\item $A$ is $\ZZ_{\geq 0}$ graded and $A_0=\CC$.
\item $A$ has homogeneous generators $a_1,\ldots,a_n$ of strictly positive degree. 
\end{enumerate}
\end{definition}

For a set of homogeneous generators of $A$ as in (iii) and the corresponding embedding of $X$ into $\CC^n$, the origin is the common point of the orbit closures from (i), it is called the \emph{vertex} of $X$. We refer to~\cite{Dem, Dolga} for detailed treatments of quasicones. 

\begin{lemma}
Let $X$ be a normal quasicone. Then $\Pic(X)=0$.
\end{lemma}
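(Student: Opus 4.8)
The statement to prove is that a normal quasicone $X$ has trivial Picard group, $\Pic(X)=0$. The reference already cited in the excerpt is~\cite[Le.~5.1]{murthy}, so the cleanest route is to invoke that result directly: for a normal affine variety with a $\CC^*$-action contracting to a single fixed point (the vertex), every line bundle is trivial. I would therefore spell out that $X = \Spec(A)$ with $A$ positively $\ZZ_{\geq 0}$-graded and $A_0 = \CC$ (condition (ii) of the definition of quasicone), which places us exactly in the setting of Murthy's lemma, and conclude.

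For a self-contained argument I would instead proceed as follows. Let $L$ be a line bundle on $X$ corresponding to a rank-one projective $A$-module $M$. The $\CC^*$-action on $X$ lifts (after possibly passing to a power of the action, which does not change the conclusion) to an action on $L$, equivalently a compatible $\ZZ$-grading on $M$ as an $A$-module, because the Picard group of a quasicone is torsion-free and the action extends over the vertex. Restricting to the vertex $v \in X$: since $A_0 = \CC$, the local ring at the vertex has residue field $\CC$, and $M$ localized there is free of rank one, so we may choose a homogeneous generator $m_0$ of $M$ near $v$. The key point is that a homogeneous element of $M$ generating the stalk at the unique attracting fixed point $v$ must generate $M$ globally: the locus where $m_0$ fails to generate is a closed $\CC^*$-stable subset not containing $v$, hence empty, because every orbit closure in a quasicone contains $v$. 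Therefore $M \cong A$ and $L$ is trivial.

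The main obstacle, and the step I would be most careful about, is the lifting of the $\CC^*$-action to the line bundle $L$. In general an algebraic group action on a variety need not lift to a given line bundle, and one needs either $\CC^*$ to have no nontrivial characters obstructing the linearization (it does not, since $H^2$ of a connected solvable group vanishes in the relevant sense) or to pass to a finite power of the action. Since $\Pic(X)$ for a normal quasicone is finitely generated (indeed $X$ is affine and we are in the $\MDS$-relevant setting, though finite generation is not even needed here), and since replacing the $\CC^*$-action by $t \mapsto t^k$ changes nothing about which subsets are orbit-closed through $v$, this technicality is harmless. Once the equivariant structure is in place, the "homogeneous generator at the vertex generates everywhere" argument is the heart of the matter and is short. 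I would present the proof by citing~\cite{murthy} for the main assertion and including the one-line remark that normality is what makes orbit closures well-behaved, deferring the failure-of-normality counterexample to Remark~\ref{rem:PicNorm} as the text already promises.
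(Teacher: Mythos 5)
Your first route is exactly the paper's proof: the lemma is disposed of by citing \cite[Le.~5.1]{murthy} (the paper also points to \cite[Cor.~10.3]{Fossum} and \cite[Cor.~2.14]{ltpticr}), so on that level the proposal matches. Your self-contained sketch is a genuinely different, and essentially correct, argument: it is the geometric form of graded Nakayama -- a homogeneous generator of the stalk at the graded maximal ideal $\mathfrak{m}=A_{>0}$ generates the rank-one module globally because the non-generating locus is closed, $\CC^*$-stable and misses the vertex, hence empty. The one step you should tighten is the linearization: your justification via ``$\Pic(X)$ is torsion-free'' is circular (that is close to what you are trying to prove), and passing to a power of the $\CC^*$-action does not by itself remove the obstruction. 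The clean statement is that for a connected linear algebraic group $G$ with $\Pic(G)=0$ acting on a \emph{normal} variety, every line bundle admits a $G$-linearization (equivalently, every rank-one reflexive projective module carries a compatible $\ZZ$-grading); since $\Pic(\CC^*)=0$, no power is needed. This is also precisely where normality enters, which dovetails with the counterexample $\CC[x,y]/(x^3-y^2)$ of Remark~\ref{rem:PicNorm}: there the cuspidal quasicone has $\Pic\cong\CC_+$, and the graded-generator argument breaks down because the relevant modules need not be gradable. With that correction your alternative proof is sound and has the advantage of being self-contained, at the cost of invoking the linearization theorem where the paper simply cites Murthy.
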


\begin{proof}
This is~\cite[Le.~5.1]{murthy}, compare also~\cite[Cor.~10.3]{Fossum} and~\cite[Cor.~2.14]{ltpticr}.
\end{proof}

\begin{remark}
\label{rem:PicNorm}
If $X$ is not normal, $\Pic(X)$ can be nontrivial. Consider for example $A=\CC[x,y]/(x^3-y^2)$ with $\Pic(A)=\CC_+$.
\end{remark}

\begin{proposition}
\label{prop:quasicone2}
Let $X$ be an $\MDS$ that is either a quasicone or has only constant global regular functions, e.g. $X$ is complete. Then $\overline{X}$ is a \emph{quasicone}. If $x$ is the vertex of $\overline{X}$, $\Cl(\overline{X},x) \cong \Cl(\overline{X})$ and every irreducible component of $\Sing(\overline{X})$ intersects $x$ nontrivially.
\end{proposition}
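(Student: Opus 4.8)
The plan is to prove the three assertions of Proposition~\ref{prop:quasicone2} in order: first that $\overline{X}$ is a quasicone, then the identification $\Cl(\overline{X},x) \cong \Cl(\overline{X})$, and finally the statement on the singular locus.

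First I would show $\overline{X} = \Spec\,\Cox(X)$ is a quasicone using criterion (ii) of the definition. Recall from Section~\ref{sec:prelim} that $\Cox(X) = \bigoplus_{\Cl(X)} \Gamma(X,\mathcal{O}_X(D))$ is $\Cl(X)$-graded, and the key fact one needs is a supplementary $\ZZ_{\geq 0}$-grading with $\Cox(X)_0 = \CC$. In the two cases treated, one obtains this differently. If $X$ has only constant global regular functions (e.g. $X$ complete), one picks an ample class $w \in \Cl(X)$ (or, if $\Cl(X)$ has torsion, a class lying in the interior of the effective cone) and uses the induced coarsening $\Cox(X) = \bigoplus_{d \geq 0} \bigoplus_{\deg_w = d} \Cox(X)_\eta$; the degree-zero part consists of global sections of $\mathcal{O}_X$ in degrees $\eta$ with $\deg_w \eta = 0$, and since those $\eta$ that are not effective contribute nothing while the only effective one among them is $0$ (here one uses that the effective cone is pointed, which follows from $\Gamma(X,\mathcal{O}^*) = \CC^*$ and completeness), one gets $\Cox(X)_0 = \Gamma(X,\mathcal{O}_X) = \CC$. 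If $X$ is itself a quasicone, say $X = \Spec\, A$ with $A$ nonnegatively graded and $A_0 = \CC$, one combines the $\CC^*$-action on $X$ with the $H_X$-action on $\overline{X}$: the $\CC^*$-action lifts to $\widehat{X}$ and hence to $\overline{X}$ (by normality and since the complement of $\widehat{X}$ has codimension $\geq 2$, regular functions extend), commuting with $H_X$, and the combined grading gives a $\ZZ_{\geq 0}$-grading on $\Cox(X)$ whose degree-zero part is $\Gamma(X,\mathcal{O}_X)_0 = A_0 = \CC$. Either way, criterion (ii) is met and $\overline{X}$ is a quasicone; write $x$ for its vertex.

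Next, for $\Cl(\overline{X},x) \cong \Cl(\overline{X})$: since $\overline{X}$ is a quasicone, the preceding Lemma gives $\Pic(\overline{X}) = 0$, so the localization/restriction sequence for the class group, $\Cl(\overline{X}) \to \Cl(\overline{X},x) \to 0$ together with $\Pic(\overline{X}) \to \Pic(\overline{X} \setminus \{x\})$, shows the restriction map $\Cl(\overline{X}) \to \Cl(\overline{X},x)$ is surjective. For injectivity one argues that $\overline{X} \setminus \{x\}$ carries no nontrivial Weil divisor classes coming from $\overline{X}$ that become trivial there would have to be supported on $\{x\}$, which has codimension $\geq 2$ (as $\overline{X}$ has dimension $\geq 2$ in the relevant cases, the low-dimensional ones being handled directly); more precisely, a divisor class restricting to $0$ near $x$ is principal on a punctured neighborhood, and by the quasicone structure — contracting via the $\CC^*$-action — one extends the trivializing rational function, so the class was already trivial on $\overline{X}$. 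This is the standard statement that for a quasicone the class group is "concentrated at the vertex," and one can cite~\cite[Cor.~2.14]{ltpticr} or~\cite[Cor.~10.3]{Fossum} for the precise form.

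Finally, for the singular locus: every irreducible component $C$ of $\Sing(\overline{X})$ is $\CC^*$-stable (singularities are preserved by the group action), hence its closure contains the vertex $x$ by the defining property (i) of a quasicone — the closure of any $\CC^*$-orbit in $C$ passes through $x$; since $C$ is already closed (being a component of the closed set $\Sing(\overline{X})$), we get $x \in C$, i.e. $C$ intersects $x$ nontrivially.

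The main obstacle I expect is the unified treatment of the $\ZZ_{\geq 0}$-grading in the first step, specifically pinning down $\Cox(X)_0 = \CC$: in the complete case it rests on the effective cone being pointed (which in turn uses $\Gamma(X,\mathcal{O}^*) = \CC^*$), and in the quasicone case it rests on lifting the $\CC^*$-action compatibly through the Cox construction and checking the gradings are independent — both are routine but need the hypothesis $\Gamma(X,\mathcal{O}^*) = \CC^*$ (standing assumption in Section~\ref{sec:prelim}) to avoid units spoiling the grading. The injectivity part of $\Cl(\overline{X},x) \cong \Cl(\overline{X})$ is the other delicate point, but it is precisely the cited quasicone fact and needs no new argument.
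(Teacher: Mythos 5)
Your proposal is correct and follows essentially the same route as the paper: in both cases one produces a $\ZZ_{\geq 0}$-coarsening of the grading on $\Cox(X)$ with degree-zero part $\CC$ by projecting a pointed weight cone, and the last two assertions are standard consequences of the quasicone structure (the paper simply says they "follow directly"). The only cosmetic difference is that in the complete case the paper first passes to the finite cover $X_\TT$ (complete, so $\CC[X_\TT]=\CC$) and projects the free part of the grading, whereas you argue directly with the pointedness of the effective cone on $X$; the content is the same.
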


\begin{proof}
First assume $X$ has only constant global regular functions.
We have a decomposition $H_X=E \times \TT$ of the characteristic quasitorus of $X$ with nontrivial torus part $\TT$, since otherwise $X=X_E$ would be quasiaffine with Proposition~\ref{cor:TandE}. Then the finite morphism $X_\TT \to X$ is proper, hence $X_\TT$ is complete. Consider the $\ZZ^k$-grading of $A:=\Cox(X)$ corresponding to the quotient $\widehat X \to X_\TT$, observe $A_0=\CC$ and set $\mathfrak{m}=A \setminus A_0$. Since the weight cone contains no line, by appropriately projecting onto $\ZZ$, we can assume $k=1$ and $\overline{X}$ is a quasicone. 

Now let $X$ be a quasicone. Here the coordinate ring $A:=\CC[X]$ is $\ZZ_{\geq 0}$-graded and generators \emph{and relations} of $A$ are of strictly positive degree. The grading of $A$ can be lifted to $\Cox(X)$ and thus $\Cox(X)$ is $\ZZ^{\dim(H)+1}$ graded, where $H$ is the characteristic quasitorus of $X$. As we have $\Cox(X)_{0_H}=A_0=\CC$, the $\ZZ^{\dim(H)+1}$-weight cone of $\Cox(X)$ again contains no lines and by appropriately projecting onto $\ZZ$, we see that $\overline{X}$ is a quasicone.

The remaining assertions follow directly from $X$ being a quasicone.   
\end{proof}

\begin{remark}
If an affine $\MDS$ $X$ is a quasicone, the fan of the canonical ambient toric variety $Z$, see~\cite[Sec.~3.2.5]{coxrings}, is in fact (the fan of faces of) \emph{a cone}, since the torus action of $X$ comes from the torus action on $Z$ and thus the vertex of $X$ is the torus fixed point of $Z$. 
In general, the fan of the canonical ambient toric variety of affine $\MDS$ that are not quasicones is only a subfan of the fan of faces of a cone. 
\end{remark}

\begin{proof}[Proof of Theorem~\ref{thm:charsing}]
If $X$ is projective, then the assertion follows from~\cite[Thm.~1.1]{gongyo} and Theorem~\ref{thm:gorenstein}, Proposition~\ref{prop:quasicone2}.
If $X$ is affine, then let $(X,\Delta)$ be $\klt$. In particular, since $\Pic(X)$ is trivial, an integer multiple of  $K_X+\Delta$ is trivial. By~\cite[Cor.~1.4.3]{birketal} with $\mathfrak{E}=\emptyset$, there is a \emph{small} birational contraction $\varphi \colon Y \to X$, which is a log terminal model. There is a $\QQ$-divisor $\Delta'\geq 0$ on $Y$ such that $K_Y+\Delta'=\varphi^*(K_x+\Delta)$. It may occur that $K_Y+\Delta'$ has no trivial $\NN$-multiple any more, but it is still $\QQ$-factorial and nef. 
By~\cite[Cor.~3.9.2]{birketal}, $K_Y+\Delta'$ is semiample and thus by the proof of~\cite[Cor.~1.1.9]{birketal}, see also~\cite[Sec.~1.3]{birketal}, $\Cox(Y)$ and thus $\Cox(X)$ is finitely generated. Since $X$ is $\klt$, it is of globally or equivalently locally $F$-regular type, see~\cite[Cor.~3.4]{takagi} and for the equivalence~\cite{schwedesmith}. On the other hand, since $\Cox(X)$ is a Gorenstein quasicone by Theorem~\ref{thm:gorenstein} and Proposition~\ref{prop:quasicone2}, it is of $F$-regular type if and only if it is canonical. Now we follow exactly the lines of~\cite[Proof of Thm.~4.7]{gongyo} and see that $\Cox(X)$ is canonical. 
If $X$ is affine and $\Cox(X)$ is a Gorenstein canonical quasicone, we still have to show that $X$ is a quasicone, i.e. the converse of Proposition~\ref{prop:quasicone2} in the affine case. But this is clear since if $T_X \subseteq \TT$ is the torus part of $H_X$ inside the maximal torus $\TT$ acting on $\overline{X}$, then with respect to the corresponding $\ZZ^{\dim(T_X)}$-grading of $\Cox(X)$, we have $\CC \neq \CC[X]\subseteq \Cox(X)_0$, i.e. $\dim(\TT /T_X)\geq 0$ and $\CC[X]_0=\CC$ with respect to the corresponding $\ZZ^{\dim(\TT /T_X)}$-grading. By projecting to $\ZZ$ as in the proof of Proposition~\ref{prop:quasicone2}, we see that $X$ is a quasicone.
\end{proof}

\section{Iteration of Cox rings}
\label{sec:ICR}
Iteration of Cox rings has been introduced for varieties with a torus action of complexity one in the work~\cite{ltpticr} by Arzhantsev, Hausen, Wrobel and the author. 

\begin{definition}
Let $X$ be an $\MDS$. If the characteristic space $X^{(1)}:=\widehat{X}$ or equivalently the total coordinate space $\mathcal{X}^{(1)}:=\overline{X}$ is an $\MDS$ as well, we can consider their characteristic spaces $X^{(2)}:=\doublewidehatX{X}$ and $\mathcal{X}^{(2)}:=\doubleoverline{X}$. By iterating this procedure, we get a commutative diagram
\begin{equation}
\label{eq:CRI}
\begin{gathered}
\xymatrix{ 
\ar@{..>}[r]
&
\mathcal{X}^{(3)}
\ar[r]^{\git H_2}
&
\mathcal{X}^{(2)}
\ar[r]^{\git H_1}
&
\mathcal{X}^{(1)}
\\
\ar@{..>}[r]
&
X^{(3)}
\ar[r]^{\git H_2}
\ar[u]
&
X^{(2)}
\ar[r]^{\git H_1}
\ar[u]
&
X^{(1)}
\ar[r]^{\git H_0}
\ar[u]
&
X
}
\end{gathered}
\end{equation}
of $1$-Gorenstein varieties (with the possible exception of $X$).
Here horizontal arrows stand for quotients by characteristic quasitori and vertical arrows stand for the inclusions of the $X^{(i)}$ as open subsets with complement of codimension at least two in $\mathcal{X}^{(i)}$. We call this the \emph{iteration of Cox rings} of $X$.  If $X$ is affine, then so are the $X^{(i)}=\mathcal{X}^{(i)}$.
\end{definition}

\begin{remark}
In the \emph{iteration of Cox rings} of an $\MDS$ $X$, we have three possibilities:
\begin{itemize}
\item For each $i \in \NN$, $X^{(i)}$ is an $\MDS$ with nontrivial divisor class group $\Cl(X^{(i)})$. We say that $X$ has \emph{infinite iteration of Cox rings} and set $N:=\infty$.
\item For some $N \in \NN$, either $X^{(N)}$ is not an $\MDS$ or it is factorial.
We call $\mathfrak{R}(X):=\KK[X^{(N)}]$ the \emph{master Cox ring} of $X$. In this case the diagram~\ref{eq:CRI} is finite and we say that $X$ has \emph{finite iteration of Cox rings}.
\begin{itemize}
\item If $X^{(N)}$ is factorial, we say that $X$ has factorial master Cox ring.
\item If $X^{(N)}$ is not an $\MDS$, we say that $X$ has non-$\MDS$ master Cox ring.
\end{itemize}
\end{itemize}
\end{remark}

\begin{proposition}
\label{prop:facquot}
For each natural number $i\leq N$, we can represent $X$ as a quotient of $X^{(i)}$ by a solvable group $G_i$ acting freely on a subset with complement of codimension at least two and the chain of abelian quotients
$$
 \xymatrix{ 
X^{(i)}
\ar[r]^{\git H_{i-1}}
&
\cdots
\ar[r]^{\git H_2}
&
X^{(2)}
\ar[r]^{\git H_1}
&
X^{(1)}
\ar[r]^{\git H_0}
&
X
}
$$
 can be retrieved by setting $H_j:=G_i^{(j-1)}/G_i^{(j)}$ and $X^{(j)}:=X^{(i)} \git G_i^{(j-1)}$ for the $k$-th derived subgroups $G_i^{(k)}$ of $G_i$.
\end{proposition}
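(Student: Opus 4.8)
The plan is to prove this by induction on $i$, unwinding the iterated Cox ring construction one step at a time and patching together the composed quasitorus quotients into a single solvable group action. The key structural input is that each stage of the iteration is an almost principal quasitorus bundle (a quotient presentation), and that such quotients can be composed.

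\textbf{Setup and base case.} For $i=1$, we have the characteristic space $X^{(1)} = \widehat{X}$ together with the quotient presentation $X^{(1)} \to X = X^{(1)} \git H_0$, where $H_0 = H_X$ is the characteristic quasitorus. This is an almost principal $H_0$-bundle in the sense of Definition~\ref{def:hash}, so $H_0$ acts freely on a subset of $X^{(1)}$ with complement of codimension at least two; setting $G_1 := H_0$ (which is abelian, hence solvable, with $G_1^{(1)}$ trivial) the statement holds trivially.

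\textbf{Inductive step: building $G_i$.} Suppose the claim holds for $i-1$, so $X = X^{(i-1)} \git G_{i-1}$ with $G_{i-1}$ solvable acting as required, and the derived series of $G_{i-1}$ recovers the chain $X^{(i-1)} \to \cdots \to X^{(1)} \to X$. Now $X^{(i)} = \widehat{X^{(i-1)}}$, and the quotient presentation $X^{(i)} \to X^{(i-1)} = X^{(i)} \git H_{i-1}$ is an almost principal $H_{i-1}$-bundle. I would like to assemble $G_i$ from $G_{i-1}$ and $H_{i-1}$ as a group acting on $X^{(i)}$ whose quotient is $X$. The natural candidate is to lift the $G_{i-1}$-action along $X^{(i)} \to X^{(i-1)}$; the correct way to organize this is via the exact sequence $1 \to H_{i-1} \to G_i \to G_{i-1} \to 1$, where $G_i$ is realized concretely as the relevant subgroup of automorphisms of $X^{(i)}$ (equivalently, working on the ring side, the grading group $\Cl(X^{(i-1)})$ extends the grading group of $X$ by $\Cl(X^{(i)})$). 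One then checks: (a) $G_i$ is solvable, since it is an extension of the solvable group $G_{i-1}$ by the abelian (quasitorus) group $H_{i-1}$; (b) the derived series of $G_i$ satisfies $G_i^{(1)} = $ (the preimage of $G_{i-1}^{(1)}$), and inductively $G_i^{(k)}$ maps onto $G_{i-1}^{(k-1)}$ with kernel $H_{i-1}$ for $k \geq 1$ — here one uses that $H_{i-1}$ is central or at least that the commutator structure is controlled because $H_{i-1}$ acts as the characteristic quasitorus; (c) accordingly $G_i^{(k-1)}/G_i^{(k)} \cong H_{k-1}$ and $X^{(i)} \git G_i^{(k-1)} \cong X^{(k-1)}$, by repeatedly applying that composites of good quotients by the successive subquotients give the iterated characteristic spaces; (d) $G_i$ acts freely in codimension one, since both $H_{i-1}$ acts freely in codimension one on $X^{(i)}$ over $(X^{(i-1)})_{\reg}$ and, by induction, $G_{i-1}$ acts freely in codimension one on $X^{(i-1)}$, and the preimage of a codimension $\geq 2$ set under the almost principal bundle $X^{(i)} \to X^{(i-1)}$ again has codimension $\geq 2$.

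\textbf{Main obstacle.} The delicate point is (b)–(c): showing that the derived series of the assembled group $G_i$ \emph{exactly} reproduces the quasitori $H_j$ and the intermediate spaces $X^{(j)}$, rather than some coarser or finer filtration. This requires knowing that $[G_i, G_i]$ is precisely the subgroup whose quotient yields $X^{(1)}$, which in turn hinges on the fact that the characteristic quasitorus at each stage is \emph{the} maximal abelian quotient acting with the requisite freeness — i.e. that each $H_{i-1}$ contributes a full "abelianization layer." I would establish this by an identification on character groups: $\Chi(G_i)$ sits in an exact sequence with $\Chi(G_{i-1})$ and $\Chi(H_{i-1}) = \Cl(X^{(i-1)})$, and the commutator subgroup corresponds under duality to the kernel of the map to the maximal abelian quotient, which one checks is exactly $\Cl(X^{(i-1)})$ because $X^{(i-1)}$ has no nonconstant invertible global functions (guaranteed by the quasicone / completeness hypotheses carried through the iteration, cf. Section~\ref{sec:prelim}). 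Once this matching is in place, the remaining verifications are formal consequences of the universal property of quotient presentations in~\cite[Thm.~4.2.1.4]{coxrings} and the codimension bookkeeping already used in Proposition~\ref{prop:relMDS}.
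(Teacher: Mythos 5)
Your overall architecture --- realizing $G_i$ inductively as an extension $1 \to H_{i-1} \to G_i \to G_{i-1} \to 1$ by lifting the $G_{i-1}$-action to the characteristic space, then matching the derived series against the tower of quasitori --- is the same as the paper's, which takes the construction of $G_i$ from \cite[Proof of Thm.~1.6]{ltpticr}. Your points (a) and (d) are fine. The gap is in (b)--(c), and it sits exactly at the step the paper singles out as needing a new argument because $X^{(i)}$ is not assumed factorial.

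First, the justification you offer is backwards: if $H_{i-1}$ were central, the claim would \emph{fail}. Already for $i=2$, a central split extension gives $G_2=H_1\times H_0$ with $[G_2,G_2]=1$, whereas the proposition forces $[G_2,G_2]=H_1$, since $X^{(1)}=X^{(2)}\git H_1$ must be the quotient by the first derived subgroup. What is needed is the opposite of centrality: the conjugation action of the group above on each quasitorus layer must have \emph{trivial coinvariants}, i.e. the commutators $[H_k,\,\cdot\,]$ must exhaust $H_k$; dually, $\Cl(X^{(k)})=\Chi(H_k)$ must have no nonzero invariant class. Second, your proposed verification via character groups and the absence of nonconstant invertible functions does not produce this: that hypothesis governs the exact sequences of character groups attached to a quotient presentation, not the conjugation action. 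The input that actually does the job --- and the one the paper uses --- is that every $\mathcal{D}_k$-stable Weil divisor on $X^{(i)}$ is principal, where $\mathcal{D}_k\trianglelefteq G_i$ is the normal subgroup with $X^{(i)}\git\mathcal{D}_k=X^{(k)}$; this holds because $X^{(i)}\to X^{(k)}$ factors through the chain of characteristic spaces $X^{(i)}\to X^{(i-1)}\to\cdots\to X^{(k)}$, each of which principalizes pulled-back divisors. With that hypothesis in hand, \cite[Prop.~3.5]{ArGa} identifies $\CC[X^{(i)}]^{[\mathcal{D}_k,\mathcal{D}_k]}$ with $\Cox(X^{(k)})$, i.e. $[\mathcal{D}_k,\mathcal{D}_k]=\mathcal{D}_{k+1}$, which is precisely the statement your steps (b)--(c) require. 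Without this (or an equivalent substitute) your induction does not close.
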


\begin{proof}
By~\cite[Proof of Thm.~1.6]{ltpticr}, for each $i \leq N$, we can represent $X$ as a quotient of $X^{(i)}$ by solvable groups $G_i$ defined by $G_1:=H_0$ and a certain semidirect product $G_k:=H_{k-2} \rtimes G_{k-1}$ for $k\geq 2$. Now fix some $i \leq N$ and set $\mathcal{D}_k:=G_i/G_{k}$. Then $X^{(j)}=X^{(i)} \git \mathcal{D}_{j}$.
We can follow~\cite[Proof of Thm.~1.6]{ltpticr}, where since we do not assume $X^{(i)}$ to be factorial, we have to check that each $\mathcal{D}_k$-stable divisor on $X^{(i)}$ is principal in order to apply~\cite[Prop.~3.5]{ArGa}. But this holds since $X^{(i)} \to X^{(i)} \git \mathcal{D}_k$ factors through the characteristic spaces 
$$
X^{(i)} \to X^{(i-1)} \to \ldots \to X^{(k)}=X^{(i)} \git \mathcal{D}_{k}.
$$
Thus we have that $[\mathcal{D}_k,\mathcal{D}_k]=\mathcal{D}_{k+1}$ and the assertion follows.
\end{proof}

If $X$ has finite iteration of Cox rings, we call $G_X:=G_N$ the \emph{characteristic solvable group} of $X$.

\begin{example}
It is clear that toric varieties trivially have finite iteration of Cox rings with polynomial master Cox ring. On the other hand, in~\cite{gagliardi}, it was shown that \emph{spherical} varieties have finite iteration of Cox rings with factorial master Cox ring and at most two iteration steps.

In~\cite{ltpticr}, Arzhantsev, Hausen, Wrobel and the author showed that for log terminal singularities with a torus action of complexity one, Cox ring iteration is finite with factorial master Cox ring. Moreover, the possible chains for these singularities have been calculated explicitly in~\cite[Rem.~6.7]{ltpticr}. In dimension two, one retrieves exactly the representation of the log-terminal singularities as finite quotients of $\CC^2$, i.e. the Cox ring iteration chains of those singularities form a tree with the single root $\CC^2$, see~\cite[Ex.~4.8]{ltpticr}. In~\cite[Thm.~1.9]{ltpticr}, also the Cox ring iteration tree for \emph{compound du Val} threefold singularities with a two-torus action has been given, and it can be seen that the Cox rings preserve the compound du Val property. 
Moreover, in the work~\cite{canonical} by H\"attig and the author one can find the Cox ring iteration tree of \emph{canonical} threefold singularities with a two-torus action. It turns out that all master Cox rings here are compound du Val as well. 

In~\cite{hauswrob}, Hausen and Wrobel gave criteria for affine varieties with a torus action of complexity one to have finite iteration of Cox rings with factorial master Cox ring. In fact, one can directly verify from their computations that all varieties with a torus action of complexity one have finite iteration of Cox rings, though with possibly non-$\MDS$ master Cox ring.
\end{example}

The following \emph{relative} version of Cox ring iteration is the first generalization of Proposition~\ref{prop:relMDS}, from which then directly follows the most general version, the \emph{web of Cox ring iterations} from Theorem~\ref{thm:webCRI}.

\begin{proposition}[Relative iteration of Cox rings]
\label{prop:relICR}
Let the quasitorus $H$ act on $X$, such that the good quotient $Y:=X\git H$ exists and is an almost principal $H$-bundle. Then $X$ has finite iteration of Cox rings if and only if $Y$ has so. Moreover, $X$ has factorial master Cox ring if and only if $Y$ has so and in this case, $\mathfrak{R}(X)=\mathfrak{R}(Y)$.
\end{proposition}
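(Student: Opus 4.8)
The plan is to combine Proposition~\ref{prop:relMDS} with an induction on the number of Cox ring iteration steps, using the special role of the two extremal quotient presentations isolated in Corollary~\ref{cor:TandE}. First I would set up notation: write $H_X$ for the characteristic quasitorus of $X$ and recall from Proposition~\ref{prop:relMDS}(ii) that if $X$ is an $\MDS$, then $Y$ is an $\MDS$ as well and there is a commutative diagram of almost principal quasitorus bundles with $\widehat X \to \widehat Y$, $\widehat X \to X$, $\widehat Y \to Y$ and $X \to Y$. The crucial point is that $\widehat X \to \widehat Y$ is again an almost principal quasitorus bundle, so the hypotheses of the proposition propagate to the pair $(\widehat X, \widehat Y)$, hence to $(\mathcal X^{(i)}, \mathcal Y^{(i)})$ for every $i$ for which these exist. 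Thus the iteration of Cox rings of $X$ and of $Y$ proceed ``in parallel'': $X^{(i)}$ is an $\MDS$ precisely when $Y^{(i)}$ is, and the two processes terminate at the same step $N$.

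Next I would establish that the \emph{reason} for termination is the same for both. By Proposition~\ref{prop:relMDS}(i), whenever $H$ is a torus the Cox rings already agree, $\Cox(X)=\Cox(Y)$, so the interesting case is when the bundle group $H$ contributes to the torsion part. The key is to track, at each stage, the decomposition $H_{X^{(i)}} = E_i \times \TT_i$ into torsion and torus parts and to compare it with $H_{Y^{(i)}}$. Using Corollary~\ref{cor:TandE}(i), replacing $X^{(i)}$ by $(X^{(i)})_{E_i}$ does not change the Cox ring, and it passes the class group to its torsion; similarly for $Y^{(i)}$. An induction then shows: $X^{(N)}$ is factorial iff $Y^{(N)}$ is factorial, and in that case one unwinds the chains of almost principal quasitorus bundles to identify $\KK[X^{(N)}]$ with $\KK[Y^{(N)}]$, i.e. $\mathfrak R(X)=\mathfrak R(Y)$. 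Dually, if one of the two master Cox rings is non-$\MDS$, the other must be as well, since an almost principal quasitorus bundle over a non-$\MDS$ variety cannot be an $\MDS$ by Proposition~\ref{prop:relMDS}(ii) applied contrapositively to the bundle $X^{(N)} \to Y^{(N)}$.

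I expect the main obstacle to be the bookkeeping needed to guarantee that the bundle structure genuinely \emph{persists at every level} of the iteration, and in the right direction: Proposition~\ref{prop:relMDS}(ii) gives $\widehat X \to \widehat Y$, but to iterate I need to know that this is once more an almost principal quasitorus bundle with only constant invertible homogeneous functions and with the codimension-two locus chosen as the regular locus, i.e. that it is a quotient presentation in the sense used throughout the paper. This follows from the fact that $\widehat X \to \widehat Y$ is obtained by applying~\cite[Thm.~4.2.1.4]{coxrings} to the chain $\widehat{\widehat Y} \to \widehat X \to \widehat Y$, but it must be checked carefully that the invertible-function condition is preserved (here one uses that $\widehat X$, being quasiaffine with class group forced by Corollary~\ref{cor:TandE}, has $\Gamma(\widehat X,\mathcal O^*)=\CC^*$, cf.\ the remarks in Section~\ref{sec:prelim}). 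The second, milder subtlety is the identification of master Cox rings: one needs that the composite $\mathcal X^{(N)} \to \mathcal Y^{(N)}$, assembled from the individual quasitorus quotients, is a \emph{finite} morphism once both class groups are torsion, so that factoriality transfers and the coordinate rings agree. Once these two points are in place, the equivalences and the equality $\mathfrak R(X)=\mathfrak R(Y)$ drop out of the induction.
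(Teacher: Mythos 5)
Your overall strategy --- building a ladder of almost principal quasitorus bundles level by level from Proposition~\ref{prop:relMDS} and then transferring factoriality of the master Cox ring via the factorial-grading (Bechtold) argument --- is the same as the paper's. But there is a concrete misstep in your first paragraph: Proposition~\ref{prop:relMDS}(ii) only gives \emph{descent} of $\MDS$-ness along $X^{(i)}\to Y^{(i)}$; it does not give the converse ``$Y^{(i)}$ $\MDS$ $\Rightarrow$ $X^{(i)}$ $\MDS$'' at the \emph{same} index, so the claim that the two iterations ``proceed in parallel'' and ``terminate at the same step $N$'' is unjustified, and in fact false in general: if $X^{(N)}$ is factorial, then $\Cox(Y^{(N)})=\CC[X^{(N)}]$, so the iteration of $Y$ runs one step further before reaching the same factorial ring. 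The correct upward implication comes from Proposition~\ref{prop:relMDS}(iii): it is $Y^{(i+1)}$, not $Y^{(i)}$, that is an almost principal quasitorus bundle over $X^{(i)}$, giving the interleaved chain $\cdots\to Y^{(i+1)}\to X^{(i)}\to Y^{(i)}\to\cdots$ and the two usable implications ``$X^{(i)}$ $\MDS$ $\Rightarrow$ $Y^{(i)}$ $\MDS$'' and ``$Y^{(j+1)}$ $\MDS$ $\Rightarrow$ $X^{(j)}$ $\MDS$''. This off-by-one also undercuts your treatment of the non-$\MDS$ case: from $X^{(N)}$ non-$\MDS$ you cannot conclude $Y^{(N)}$ non-$\MDS$ by contraposing (ii) for the bundle $X^{(N)}\to Y^{(N)}$ (that contrapositive runs the other way); what you do get is that $Y^{(N+1)}$ cannot be an $\MDS$, since otherwise $X^{(N)}$ would be one. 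This still yields finiteness of the iteration of $Y$, but possibly at a different step.

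With the interleaved ladder in place, the rest of your argument goes through essentially as in the paper: if $\mathfrak{R}(X)=\CC[X^{(N)}]$ is factorial, it is factorially graded by the character group of the bundle $X^{(N)}\to Y^{(N)}$, hence equals $\Cox(Y^{(N)})$ and is the factorial master Cox ring of $Y$; the converse uses the bundle $Y^{(N)}\to X^{(N-1)}$ symmetrically. Two further remarks: your detour through Corollary~\ref{cor:TandE} and the torsion/torus decompositions at each level is not needed for this proposition (the paper reserves that tool for the Gorenstein and finiteness theorems); and your worry about the invertible-function and codimension conditions persisting for $\widehat X\to\widehat Y$ is legitimate, but this map is produced by \cite[Thm.~4.2.1.4]{coxrings} precisely as a quotient presentation, so those conditions come for free.
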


\begin{proof}
If $H$ is torsion free, by Proposition~\ref{prop:relMDS} (i), $X$ and $Y$ have the same Cox ring, i.e. the same iteration of Cox rings from the second step on.
So assume $H$ has torsion. From Proposition~\ref{prop:relMDS} (iii), we get a Cox ring iteration ladder:
$$
 \xymatrix@C=2em@R=-0.3em@H=\dhatheightt@W=\Ywidth{ 
 \ar@{..>}[rdd]
 \\
   &&
\ar@{..>}[rdd]
 \\
& X_3
 \ar[rrd]
\ar[rdd]
\\
&&  &
 Y_3
 \ar[ld]
\ar[rdd]
 \\
&& X_2
 \ar[rrd]
\ar[rdd]
\\
&&& &
 Y_2
 \ar[ld]
\ar[rdd]
 \\
&&& X_1
\ar[rrd]
\ar[rdd]
\\
&&&& & 
Y_1
\ar[ld]
\ar[rdd]
\\
&&&& 
X
\ar[rrd]
\\
&&&&&&
Y
}
$$
So if $X_i$ is an $\MDS$, then so is $Y_i$ and if $Y_{j+1}$ is an $\MDS$, then so is $X_j$. Thus $X$ has finite iteration of Cox rings if and only if $Y$ has so. 
If $\mathfrak{R}(X)$ is the factorial master Cox ring of $X$, then $X_N$ is an almost principal quasitorus bundle over $Y_{N}$ and $\mathfrak{R}(X)$ is factorially graded with respect to the associated character group, since it is factorial. So $\mathfrak{R}(X)$ is the Cox ring of $Y_N$ and thus the master Cox ring of $Y$. If on the other hand $\mathfrak{R}(Y)$ is the factorial master Cox ring of $Y$, then $Y_N$ is an almost principal quasitorus bundle over $X_{N-1}$ and we can apply the same argument.
\end{proof}

\begin{proof}[Proof of Corollary~\ref{cor:unisolv}]
Let $Y \to X$ be a quotient by the solvable group $G$ as in the corollary. Then by any normal series of $G$ we get a chain of quotient presentations
$
Y \! \to \cdots\!\to \!  X_3 \! \to \! X_2 \! \to \! X.
$
Thus by Theorem~\ref{thm:gorenstein}, $Y$ is an $\MDS$ with factorial master Cox ring $\mathfrak{R}(X)$ and the diagram 
$$
\xymatrix@C=5em{ 
{X^{(N)}}
\ar[r]^{  \git G_Y  } 
\ar@/_1pc/[rr]_{ \git G_X}
&
{Y} 
\ar[r]^{  \git G} 
&
X }.
$$ 
is commutative, which proves the assertion.
\end{proof}

Now we come to the proof of our last main theorem.

\begin{theorem}[Theorem~\ref{thm:CRI}]
\label{thm:ICR}
Let $X$ be of Fano type or a $\klt$ quasicone. Then $X$ has finite iteration of Cox rings with factorial master Cox ring.
\end{theorem}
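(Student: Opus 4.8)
The plan is to reduce the statement to the finiteness criterion supplied by the theorem of Greb, Kebekus and Peternell on sequences of finite quasi-\'etale morphisms between klt spaces, by first replacing the (generally nondiscrete) quasitorus quotients in the iteration by honest finite Galois covers. By Theorem~\ref{thm:charsing}, being of Fano type (respectively a $\klt$ quasicone) is equivalent to $\Cox(X)$ being a Gorenstein canonical quasicone, and — crucially — this property is \emph{preserved} under passing to the total coordinate space: $\overline{X}$ is again a $\klt$ quasicone by Theorem~\ref{thm:gorenstein} together with Proposition~\ref{prop:quasicone2} and the triviality of $\Cl(\overline{X})/\Pic$ at the vertex. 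Hence every $X^{(i)}$ in the iteration is again a $\klt$ quasicone, so all $X^{(i)}$ are automatically $\MDS$ and $1$-Gorenstein; what remains is purely the \emph{finiteness} of the chain.

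First I would set up, using Lemma~\ref{le:TandE}, the factorization of each iteration step $X^{(i+1)} = \widehat{X^{(i)}} \to X^{(i)}$ through the intermediate quotient $X^{(i)}_{\TT} = \widehat{X^{(i)}} \git \TT_i$ by the torus part of the characteristic quasitorus, so that $X^{(i)}_{\TT} \to X^{(i)}$ is a \emph{finite} Galois cover (the quotient by the torsion part $E_i$) which is quasi-\'etale, i.e.\ \'etale in codimension one, since the characteristic quotient is an almost principal bundle over $X^{(i)}_{\reg}$. By part~(ii) of Lemma~\ref{le:TandE}, $\Cox(X^{(i)}_{\TT}) = \Cox(\widehat{X^{(i)}}) = \Cox(X^{(i+1)})$, so iterating the Cox construction on the $X^{(i)}$ and iterating it on the finite covers $X^{(i)}_{\TT}$ produce, after the torus-quotient step, the same sequence of rings. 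This lets me splice the finite covers $X^{(i)}_{\TT} \to X^{(i)}$ into a single infinite (a priori) tower of finite quasi-\'etale morphisms between normal klt varieties; since the $X^{(i)}$ are klt quasicones and klt is preserved under quasi-\'etale finite covers, every variety in this tower is klt.

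The key step is then to invoke Greb–Kebekus–Peternell~\cite[Thm.~1.1]{grebkebpet}: in a sequence of finite surjective morphisms between klt spaces that are \'etale over the smooth loci, all but finitely many are isomorphisms. Applied to my tower, this forces the torsion parts $E_i$ to be trivial for all $i \geq N_0$, i.e.\ the characteristic quasitorus $H_{X^{(i)}}$ is a torus from some step on. But once $H_{X^{(i)}}$ is a torus, $\Cox(X^{(i)})$ is \emph{factorial} (the free-grading case), so $X^{(i+1)} = \widehat{X^{(i)}}$ has trivial divisor class group and the iteration terminates at a factorial master Cox ring. More precisely, I would argue that once $E_i$ is trivial, $X^{(i+1)}$ is quasiaffine factorial, hence its own total coordinate space, so $N \leq N_0 + 1$ and $\mathfrak{R}(X) = \KK[X^{(N)}]$ is factorial — which is exactly the claim, and combined with Proposition~\ref{prop:facquot} also yields the presentation of $X$ as a solvable quotient of $X^{(N)}$.

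The main obstacle, and the point needing the most care, is checking the hypotheses of~\cite{grebkebpet} \emph{uniformly} along the tower: one must know that each $X^{(i)}$ is klt (handled by Theorem~\ref{thm:charsing} plus preservation of the klt-quasicone property under the Cox construction), that the finite covers $X^{(i)}_{\TT} \to X^{(i)}$ are genuinely quasi-\'etale (handled by the almost-principal-bundle property: the branch locus, being the image of the complement of $\widehat{X^{(i)}}$ in $\overline{X^{(i)}}$ union the non-free locus, has codimension at least two — and the relevant $V$ can be taken to be $X^{(i)}_{\reg}$ as in Section~\ref{sec:quotpres}), and that the composed morphisms $X^{(i+k)}_{\TT} \to X^{(i)}$ are again finite — which requires that once the torus parts vanish the tower stabilizes, closing the loop. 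A subtlety is that the finite covers produced step by step need to be assembled into one sequence indexed by $\mathbb{N}$ with compatible maps; this is exactly what the commutative diagram of Lemma~\ref{le:TandE} and the equality of Cox rings in its part~(ii) guarantee, since they show $X^{(i)}_{\TT}$ is the characteristic space of $X^{(i+1)}$ modulo its torus, so the covers chain up correctly.
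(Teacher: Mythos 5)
Your proposal follows essentially the same route as the paper: decompose each characteristic quasitorus into its torus and torsion parts, use Lemma~\ref{le:TandE} to splice the resulting finite Galois quasi-\'etale covers into a single tower of $\klt$ quasicones, and apply the Greb--Kebekus--Peternell theorem to conclude that all but finitely many of these covers are \'etale. The only point you leave implicit is why ``eventually \'etale'' yields ``eventually trivial torsion part'': the paper settles this by noting that a nontrivial cover in this tower is necessarily ramified over the vertex of the quasicone (citing~\cite[Prop.~1.6.2.5]{coxrings}), after which factoriality of the master Cox ring follows exactly as you describe.
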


\begin{proof}
Let $X$ be a $\klt$ quasicone and assume it has infinite iteration of Cox rings. If $H_{i}$ is a torus for some $i \in \NN$, then $X_{i+1}$ is factorial by~\cite[Prop.~1.4.1.5]{coxrings}. So each $H_i$ has a nontrivial torsion part $E_i$ and a possibly trivial torus part $\TT_i$.

Denote $\Xendl_1:=X_{\TT}=X^{(1)} \git \TT_0$. By Corollary~\ref{cor:TandE}, $X^{(2)}$ is the characteristic space of $\Xendl_1$. Denote by $\mathfrak{H}_1$ the characteristic quasitorus of $\Xendl_1$ and by $\mathfrak{T}_1$ its torus part. Define $\Xendl_2:=X^{(2)} \git\mathfrak{T}_1$. By iteration of Cox rings, we get the following infinite commutative diagram, where all diagonal arrows are characteristic spaces:
$$
\xymatrix@C=6em@R=2em@H=\dhatheightt@W=\Ywidth{ 
 \ar@{..>}[rd]
 \\
   &
 X^{(3)}
\ar[rd]^{\git H_2}
\ar[ddd]^{\git \mathfrak{T}_2}
\ar[dddr]^{\git \mathfrak{H}_2}
\\
&&
 X^{(2)}
\ar[rd]^{\git H_1}
\ar[dd]^{\git \mathfrak{T}_1}
\ar[ddr]^{\git \mathfrak{H}_1}
\\
&&&
 X^{(1)}
\ar[rd]^{\git H_0}
\ar[d]^{\git \TT_0}
\\
 \ar@{..>}[r]
&
\Xendl_3
\ar[r]
&
\Xendl_2
\ar[r]
&
\Xendl_1
\ar[r]
&
 X
}
$$  
The bottom of this diagram gives a sequence of finite Galois morphisms between $\klt$ quasicones \'etale in codimension one such that all compositions are Galois as well, compare the lifting of the abelian group actions from Proposition~\ref{prop:facquot}. By~\cite[Thm.~1.1]{grebkebpet}, all but finitely many of the  $\Xendl_{i+1} \to \Xendl_i$ must be \'etale. But none of them is, since they are all ramified over the vertex, see~\cite[Prop.~1.6.2.5]{coxrings}. So $X$ must have finite iteration of Cox rings with master Cox ring $\mathfrak{R}(X)=\CC[X^{(N)}]$ for some $N \in \NN$.

Since all $X^{(i)}$ in the iteration of Cox rings of $X$ are Gorenstein canonical quasicones and thus $\MDS$ by Theorem~\ref{thm:charsing}, $X^{(N)}$ is an $\MDS$ and thus factorial. The assertion for varieties of Fano type follows directly.
\end{proof}

\section*{Questions} 
We conclude with a number of questions arising in the context of our observations.
The first one concerns the three possibilities of Cox ring iteration:

\begin{question}
Does every $\MDS$ have \emph{finite} iteration of Cox rings?
\end{question}

As we mentioned, this is true for $\MDS$ with torus action of complexity one. So we tend to say yes. But let us take a closer look at the circumstances. In principle, an affine (and thus from the viewpoint of Cox ring iteration any) $\MDS$ $X$ can differ in two ways from $\klt$ quasicones. 

On the one hand, $X$ can have nontrivial Picard group. This can lead to non-$\MDS$ master Cox rings even if $X$ is log terminal, see~\cite{hauswrob}. If one is only interested in local properties, by taking section rings for $\Cl(X)/\Pic(X)$ one should arrive at \emph{locally factorial} master section rings at least if $X$ is $\klt$. If we stick to Cox rings, then if some $X^{(i)}$ in the iteration of Cox rings becomes locally factorial, $X^{(i+1)} \to X^{(i)}$ is a free quasitorus bundle, so we cannot apply~\cite[Thm.~1.1]{grebkebpet} any more. The point is that such $X^{(i)}$ is \emph{$A_2$-maximal}, so it can not be embedded as an open subvariety with complement of codimension at least two in an \emph{$A_2$-variety} (i.e. a variety such that any two points admit a common affine open neighbourhood). 
But the freeness on the other hand leads us to the next question, where an affirmative answer would guarantee finiteness of Cox ring iteration for affine $\klt$ $\MDS$:

\begin{question}
Let $X$ be a locally factorial $A_2$-maximal $\MDS$. Then is it true that $\overline{X}$ is either factorial or non-$\MDS$?
\end{question}

We can of course reduce this question to the case when $\Pic(X)$ has torsion, since if not we already know that $\overline{X}$ is factorial.

The second way an affine $\MDS$ $X$ can differ from $\klt$ quasicones is having bad singularities. Let us isolate this case from the other one by assuming $X$ is a quasicone. Then by our reduction to finite (solvable) covers, this case becomes closely related to questions about local fundamental groups of non log terminal singularities. Even \emph{\'etale} local fundamental groups of non log terminal singularities can be infinite already in the surface case: triangle singularities are $\MDS$ quasicones with finite iteration of Cox rings (with possibly non-$\MDS$ master Cox ring) but their local fundamental groups are the  triangle groups, which are residually finite, i.e. their profinite completions are infinite. We can still hope for the following in the spirit of~\cite[Thm.~1.1]{grebkebpet}, compare also~\cite[Thm.~1]{Xu},~\cite{lafacefundgroup} and~\cite[Question~26]{kollnew}:

\begin{question}
Let $X$ be a non log terminal quasicone and $\cdots \to X_2 \to X_1 \to X$  a sequence of finite \emph{abelian} covers \'etale over smooth loci. Then is it true that all but finitely many are \'etale?
\end{question} 

Note that this question naturally leads to the notion of a universal \emph{solvable} cover. 

We come to our last and most vague question. The quasicones can be seen as affine models of singularities, reflecting only local properties without being truly local, which makes it possible to study properties such as quotient presentations of the singularity with Cox ring methods. So the following question comes up:

\begin{question}
\label{q:4}
Is every normal singularity (algebraically? analytically?) isomorphic to the vertex of a quasicone? At least to a point of an affine variety with trivial Picard group?
\end{question}

On the one hand, analytic isomorphy looks promising due to local contractability of singularities, while local contractability tends to lead to trivial Picard group~\cite{hammtrang} or even to being isomorphic to a quasicone, at least in the case of hypersurfaces, see~\cite{saito}. But on the other hand, the local class group can then be changed by algebraization, see~\cite{ParVStra}, which we do not want. Our impression is  that in order to answer this question in an appropriate way, one first needs an appropriate type of isomorphy.

Note that at least for singularities sitting in affine $\MDS$ we can answer the second part of Question~\ref{q:4} in the affirmative: the singularity corresponds to a cone of the fan of the ambient toric variety, see~\cite[Sec.~3.3.1]{coxrings}, restricting to this cone means restricting to an affine neighbourhood of the singularity with trivial Picard group by~\cite[Cor.~3.3.1.7]{coxrings}.

\end{document}